\def\append@label@year@{%
    \safe@set\@tempcnta\bib@year
    \edef\bib@citeyear{\the\@tempcnta}%
    \ifnum\bib@citeyear>9
      \append@to@stem{%
          \ifx\bib@year\@empty
          \else
            \@xp\year@short \bib@citeyear \@nil
          \fi
      }%
    \fi
}
\let\oldtocsection=\tocsection
\renewcommand{\tocsection}[2]{\hspace{0em}\oldtocsection{#1}{#2}}
\def\upddots{\mathinner{\mkern 1mu\raise 1pt \hbox{.}\mkern 2mu
\mkern 2mu \raise 4pt\hbox{.}\mkern 1mu \raise 7pt\vbox {\kern 7
pt\hbox{.}}} }
\numberwithin{equation}{section}
\begin{document}
\setlength{\unitlength}{2.5cm}

%%%%%%%%%%% theorem styles
\newtheorem{thm}{Theorem}[section]
\newtheorem{lm}[thm]{Lemma}
\newtheorem{prop}[thm]{Proposition}
\newtheorem{cor}[thm]{Corollary}
\newtheorem{conj}[thm]{Conjecture}
\newtheorem{specu}[thm]{Speculation}

\theoremstyle{definition}
\newtheorem{dfn}[thm]{Definition}
\newtheorem{eg}[thm]{Example}
\newtheorem{rmk}[thm]{Remark}

\newcommand{\F}{\mathbf{F}}
\newcommand{\N}{\mathbbm{N}}
\newcommand{\R}{\mathbbm{R}}
\newcommand{\C}{\mathbbm{C}}
\newcommand{\Z}{\mathbbm{Z}}
\newcommand{\Q}{\mathbbm{Q}}
\newcommand{\Mp}{{\rm Mp}}
\newcommand{\Sp}{{\rm Sp}}
\newcommand{\GSp}{{\rm GSp}}
\newcommand{\GL}{{\rm GL}}
\newcommand{\PGL}{{\rm PGL}}
\newcommand{\SL}{{\rm SL}}
\newcommand{\SO}{{\rm SO}}
\newcommand{\Spin}{{\rm Spin}}
\newcommand{\GSpin}{{\rm GSpin}}
\newcommand{\Ind}{{\rm Ind}}
\newcommand{\Res}{{\rm Res}}
\newcommand{\Hom}{{\rm Hom}}
\newcommand{\End}{{\rm End}}
\newcommand{\msc}[1]{\mathscr{#1}}
\newcommand{\mfr}[1]{\mathfrak{#1}}
\newcommand{\mca}[1]{\mathcal{#1}}
\newcommand{\mbf}[1]{{\bf #1}}
\newcommand{\mbm}[1]{\mathbbm{#1}}
\newcommand{\into}{\hookrightarrow}
\newcommand{\onto}{\twoheadrightarrow}
\newcommand{\s}{\mathbf{s}}
\newcommand{\cc}{\mathbf{c}}
\newcommand{\bfa}{\mathbf{a}}
\newcommand{\id}{{\rm id}}
\newcommand{\g}{ \mathbf{g} }
\newcommand{\w}{\mathbbm{w}}
\newcommand{\Ftn}{{\sf Ftn}}
\newcommand{\p}{\mathbf{p}}
\newcommand{\bq}{\mathbf{q}}
\newcommand{\WD}{\text{WD}}
\newcommand{\W}{\text{W}}
\newcommand{\Wh}{{\rm Wh}}
\newcommand{\Whc}{{{\rm Wh}_\psi}}
\newcommand{\ggma}{\omega}
\newcommand{\sct}{\text{\rm sc}}
\newcommand{\Of}{\mca{O}^\digamma}
\newcommand{\gk}{c_{\sf gk}}
\newcommand{\Irr}{ {\rm Irr} }
\newcommand{\Irrg}{ {\rm Irr}_{\rm gen} }
\newcommand{\diag}{{\rm diag}}
\newcommand{\uchi}{ \underline{\chi} }
\newcommand{\Tr}{ {\rm Tr} }
\newcommand{\der}\de
\newcommand{\Stab}{{\rm Stab}}
\newcommand{\Ker}{{\rm Ker}}
\newcommand{\bfp}{\mathbf{p}}
\newcommand{\bfq}{\mathbf{q}}
\newcommand{\KP}{{\rm KP}}
\newcommand{\Sav}{{\rm Sav}}
\newcommand{\de}{{\rm der}}
\newcommand{\tnu}{{\tilde{\nu}}}
\newcommand{\lest}{\leqslant}
\newcommand{\gest}{\geqslant}
\newcommand{\tu}{\widetilde}
\newcommand{\tchi}{\tilde{\chi}}
\newcommand{\tomega}{\tilde{\omega}}
\newcommand{\Rep}{{\rm Rep}}
\newcommand{\cu}[1]{\textsc{\underline{#1}}}
\newcommand{\set}[1]{\left\{#1\right\}}
\newcommand{\ul}[1]{\underline{#1}}
\newcommand{\wt}[1]{\widetilde{#1} }
%\newcommand{\wt}[1]{\stackrel{\sim}{\smash{#1}\rule{0pt}{1.05ex}}}
%%%
\newcommand{\wtsf}[1]{\wt{\sf #1}}
\newcommand{\anga}[1]{{\left\langle #1 \right\rangle}}
\newcommand{\angb}[2]{{\left\langle #1, #2 \right\rangle}}
\newcommand{\wm}[1]{\wt{\mbf{#1}}}
\newcommand{\elt}[1]{\pmb{\big[} #1\pmb{\big]} }
\newcommand{\ceil}[1]{\left\lceil #1 \right\rceil}
\newcommand{\floor}[1]{\left\lfloor #1 \right\rfloor}
\newcommand{\val}[1]{\left| #1 \right|}
\newcommand{\aff}{ {\rm aff} }
\newcommand{\ex}{ {\rm ex} }
\newcommand{\exc}{ {\rm exc} }
\newcommand{\HH}{ \mca{H} }
\newcommand{\HKP}{ {\rm HKP} }
\newcommand{\std}{ {\rm std} }
\newcommand{\motimes}{\text{\raisebox{0.25ex}{\scalebox{0.8}{$\bigotimes$}}}}

\newcommand{\FF}{\mca{F}}
\newcommand{\tv}{\tilde{v}}
\newcommand{\betaa}{\pmb{\beta}}
\newcommand{\deltaa}{\pmb{\delta}}
\newcommand{\bepsilon}{\bar{\epsilon}}

\title{Formal degrees of genuine Iwahori-spherical representations}

\author{Ping Dong, Fan Gao, and Runze Wang}
%\address{P. Dong: }
%\email{ }
\address{School of Mathematical Sciences, Zhejiang University, 866 Yuhangtang Road, Hangzhou, China 310058}
\email{{pdmath@zju.edu.cn ({\rm P. D.}),  gaofan@zju.edu.cn ({\rm F. G.}),  wang\underline{\ }runze@zju.edu.cn ({\rm R. W.})}}
%\address{R. Wang: }
%\email{}

\date{}
\subjclass[2010]{Primary 11F70; Secondary 22E50}
\keywords{covering groups, formal degrees, Iwahori--Hecke algebras, Whittaker dimension}
%\thanks{The second-named author is partially supported by the NSF grant DMS-1801273.
% The third-named author is partially supported by a Simons Foundation Collaboration Grant 426446.
%}
\maketitle

\begin{abstract} 
For irreducible genuine Iwahori-spherical discrete series representations of central covers, we verify the Hiraga--Ichino--Ikeda formula for their formal degrees. We also compute the Whittaker dimensions of these representations, when their associated modules over the genuine Iwahori--Hecke algebra are one-dimensional.
\end{abstract}

\tableofcontents

%%%

%%%
\section{Introduction} \label{S:intro}

Let $F$ be a finite extension of $\Q_p$ with discrete valuation $|\cdot|_F$. Let $O_F \subset F$ be the ring of algebraic integers and let $\mfr{p}_F \subset O_F$ be its prime ideal. Fix a uniformizer $\varpi \in \mfr{p}_F$ with $\val{\varpi}_F=q^{-1}$. Let $\mbf{G}$ be  a connected semisimple algebraic group scheme over $O_F$ such that the geometric fiber is \emph{split} over $F$. Write $G:=\mbf{G}(F)$. Assume that $F^\times$ contains the full group $\pmb{\delta}_n$ of $n$-th roots of unity. We are interested in the $\epsilon$-genuine representations of the Brylinski--Deligne central cover $\wt{G}$ as in
$$\begin{tikzcd}
\pmb{\delta}_n \ar[r, hook] & \wt{G} \ar[r, two heads, "{\mfr{q}}"] & G,
\end{tikzcd}$$
where $\pmb{\delta}_n$ always acts via a fixed embedding $\epsilon: \pmb{\delta}_n \into \C^\times$. Denote by $\Irr_\epsilon(\wt{G})$ the set of isomorphism classes of $\epsilon$-genuine irreducible representations of $\wt{G}$.
For every subset $H \subset G$, we write $\wt{H}:=\mfr{q}^{-1}(H)$.

For an irreducible discrete series $\pi$ of the linear group $G$, it was conjectured by Hiraga--Ichino--Ikeda \cite{HII, HIIc} that (with respect to a canonically defined Haar measure $\mu_{G, \psi}^\natural$ depending on an additive character $\psi$ of $F$)
\begin{equation} \label{HII}
\deg(\pi, \mu_{G, \psi}^\natural) = \frac{\dim \tau_\pi}{ \val{\mca{S}_{\phi_\pi}} } \cdot \val{\gamma(0, \phi_\pi, {\rm Ad}, \psi)},
\end{equation} 
where $(\phi_\pi, \tau_\pi)$ is an enhanced Langlands parameter of $\pi$, and $\gamma(s, \phi_\pi, {\rm Ad}, \psi)$ the adjoint gamma-factor. We refer to \cite{HII, CKK12, Qiu12, GaIc14, CiOp15, ILM17, FeOp20, FOS20, Rap21, FOS22} and references therein for some recent work on \eqref{HII} in the past decade. We should remark also that prior to \eqref{HII}, there were already explicit computations of some formal degrees such as in the works of Borel \cite{Bor76}, Reeder \cite{Ree94, Ree-00} and DeBacker--Reeder \cite{DR}.

Assume that $p\nmid n$ and thus fix a splitting of $\wt{G}$ over $K:=\mbf{G}(O_F)$. This gives a splitting of $\wt{G}$ over the standard Iwahori subgroup $I \subset K$. Let $\HH_{\bepsilon}(\wt{G}, I)$ be the genuine Iwahori--Hecke algebra of $\wt{G}$ with respect to $I$. We consider the formal degree problem of genuine Iwahori-spherical representations of $\wt{G}$ and verify the following, which is probably well-known to the experts.

\begin{thm}[{Theorem \ref{T:fd-HII}}] \label{0T:main}
Let $\wt{G}$ be an $n$-fold cover of a split semisimple $G$ with $p\nmid n$. Let $\pi_\sigma \in \Irr_\epsilon(\wt{G})$ be the genuine Iwahori-spherical representation associated with $\sigma \in \Irr(\HH_{\bepsilon}(\wt{G}, I))$. Let $\mca{L}_{\wt{G}}^{\rm DL}$ be the Deligne--Langlands correspondence for $\wt{G}$ given in Definition \ref{DLcor}, depending on the choice of the Deligne--Langlands correspondence for split linear groups as in \cite[Theorem 3.1]{FOS22}.
Let $(\phi_\sigma, \tau_\sigma):=\mca{L}_{\wt{G}}^{\rm DL}(\pi_\sigma)$ be the Deligne--Langlands parameter of $\pi_\sigma$.  Assume that $\psi$ is of conductor $O_F$ and $\pi_\sigma$ is a discrete series representation. Then one has
$$\deg(\pi_\sigma, \mu_{\wt{G}, \psi}^\natural) = \frac{\dim \tau_\sigma}{ \val{ \mca{S}_{\phi_\sigma} } } \cdot \val{\gamma(0, \phi_\sigma, {\rm Ad}, \psi)},$$
where $\mu_{\wt{G}, \psi}^\natural$ is the measure of $\wt{G}$ naturally associated with the canonical measure $\mu_{G, \psi}^\natural$, as given in \eqref{E:meas}.
\end{thm}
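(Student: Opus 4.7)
The plan is to reduce the theorem to the affine-Hecke-algebra formal-degree formula of \cite{FOS22} via the presentation of $\HH_{\bepsilon}(\wt{G}, I)$ as an Iwahori--Hecke algebra $\HH(G^\sharp, I^\sharp)$ of a split linear semisimple group $G^\sharp$ (available under $p \nmid n$ by work of Savin and McNamara), whose root datum is the ``metaplectic dual'' determined by the Brylinski--Deligne invariants of $\wt{G}$, so that $(G^\sharp)^\vee$ is canonically identified with $\wt{G}^\vee$. Under this isomorphism $\sigma$ corresponds to a Hecke module $\sigma^\sharp$, hence to an irreducible Iwahori-spherical $\pi^\sharp$ of $G^\sharp$. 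By the construction of $\mca{L}_{\wt{G}}^{\rm DL}$ in Definition \ref{DLcor}, the enhanced parameters $(\phi_\sigma, \tau_\sigma)$ of $\pi_\sigma$ and of $\pi^\sharp$ coincide after identifying the two dual groups. In particular the right-hand side of HII is the same whether computed for $\pi_\sigma$ or for $\pi^\sharp$.

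\emph{Step 1 (equality of Hecke formal degrees).} For both $\wt{G}$ and $G^\sharp$, Reeder's variant of Borel's formula \cite{Bor76, Ree94} gives
$$\deg(\pi_\sigma, \mu_{\wt{G}, \psi}^\natural) = \mu_{\wt{G}, \psi}^\natural(I)^{-1} \cdot d_{\rm Hecke}(\sigma), \qquad \deg(\pi^\sharp, \mu_{G^\sharp, \psi}^\natural) = \mu_{G^\sharp, \psi}^\natural(I^\sharp)^{-1} \cdot d_{\rm Hecke}(\sigma^\sharp),$$
where $d_{\rm Hecke}$ denotes the formal degree with respect to the canonical trace on the relevant Iwahori--Hecke algebra. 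Since the Hecke isomorphism is a $*$-isomorphism preserving the canonical traces, one has $d_{\rm Hecke}(\sigma) = d_{\rm Hecke}(\sigma^\sharp)$. Combined with the linear HII formula \cite[Theorem 3.1]{FOS22} applied to $\pi^\sharp$, this reduces the theorem to the single measure-theoretic identity
$$\mu_{\wt{G}, \psi}^\natural(I) = \mu_{G^\sharp, \psi}^\natural(I^\sharp).$$

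\emph{Step 2 (measure comparison, the main obstacle).} Both Iwahori volumes are explicit rational functions in $q$ of shape $\val{\mbf{G}'(\mbm{F}_q)}^{-1} \cdot q^{\dim \mbf{G}'}$ for the respective reductive quotient $\mbf{G}'$. The displayed identity is the subtlest part of the proof because $\mu_{\wt{G}, \psi}^\natural$ involves a factor of $n$ coming from the degree-$n$ map $\mfr{q}$ (see the definition \eqref{E:meas}), and one must verify that this $n$-factor is exactly absorbed by the change of dual root datum when passing from $\wt{G}$ to $G^\sharp$. This amounts to a direct comparison of the canonical gauge forms underlying $\mu_{G, \psi}^\natural$ and $\mu_{G^\sharp, \psi}^\natural$, together with the explicit description of the metaplectic dual root datum in terms of the BD parameters of $\wt{G}$, and in principle should be essentially forced by the normalization \eqref{E:meas}. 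Once this bookkeeping is dispensed with, the theorem follows, as the remaining ingredients --- matching of enhanced Langlands parameters and the application of \cite[Theorem 3.1]{FOS22} --- are essentially formal.
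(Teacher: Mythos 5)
Your overall architecture is the paper's: transfer $\sigma$ through the Savin isomorphism to a module over $\HH(G_{Q,n}, I_{Q,n})$ (your $G^\sharp$ is the paper's $G_{Q,n}$), invoke the linear Hiraga--Ichino--Ikeda formula of \cite{FOS22}, and match measures. However, your Step 1 buries the actual mathematical content of the argument in the unproved assertion that the Hecke isomorphism ``is a $*$-isomorphism preserving the canonical traces.'' The isomorphism $\varphi_\theta$ of \eqref{D:iso-t} is defined on the \emph{Bernstein} generators $t_y^\theta, e_{w_\alpha}^\theta$, whereas the Hecke formal degree is a sum over $w\in\wt{W}_{\rm ex}$ of terms weighted by $q^{-l(w)}$ in the \emph{standard} basis, and the two sides carry two genuinely different length functions on the same group $\wt{W}_{\rm ex}=Y_{Q,n}\rtimes W$: the restriction $l_G$ of the length function of $W_{\rm ex}=Y\rtimes W$ (which governs $|I\wt{w}I/I|$ upstairs), and the intrinsic length $l_{G_{Q,n}}$ built from the rescaled roots $n_\alpha^{-1}\alpha$ (which governs $|I_{Q,n}wI_{Q,n}/I_{Q,n}|$). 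A priori $\varphi_\theta(e_w^\theta)$ need not even be proportional to $E_w$, and trace/star preservation is exactly equivalent to the identity $q^{-l_G(w)/2}\varphi_\theta(e_w^\theta)=q^{-l_{G_{Q,n}}(w)/2}E_w$ for all $w$, which is Proposition \ref{P:key-HII} of the paper; its proof requires reducing a general $w=(y,s)$ to a dominant coweight times a Weyl element and checking that both length functions are additive on the same factorizations (the signs of $\langle\alpha,y\rangle$ and $\langle n_\alpha^{-1}\alpha,y\rangle$ agree). Without this, the claimed equality $d_{\rm Hecke}(\sigma)=d_{\rm Hecke}(\sigma^\sharp)$ is not established, and this --- not the measure comparison --- is the crux of the proof.

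Conversely, your Step 2 misdiagnoses the difficulty. There is no residual factor of $n$ to absorb: the normalization \eqref{E:meas} gives $\mu_{\wt{G},\psi}^\natural(\pmb{\delta}_n\times I)=\mu_{G,\psi}^\natural(I)$, and it is the full preimage $\pmb{\delta}_n\times I$ that plays the role of $I$ in the genuine Hecke algebra (its volume must be $1$ for $e_1^\theta$ to be the identity), so the scaling constants match by construction. The remaining identity $\mu_{G,\psi}^\natural(I)=\mu_{G_{Q,n},\psi}^\natural(I_{Q,n})$ is immediate from Gross's formula $q^{-(l+\dim\mbf{G})/2}\,\val{\mbf{T}(\kappa_F)}=q^{-\val{\Phi^+}}(1-q^{-1})^l$, since $G$ and $G_{Q,n}$ have the same rank, the same Weyl group and root systems in canonical bijection, hence the same $\val{\Phi^+}$. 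So the step you flag as ``the main obstacle'' is a one-line verification, while the step you dispose of in one clause is where the proof actually lives.
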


We briefly summarize the main steps in the proof of Theorem \ref{0T:main}, which occupy \S \ref{S:par} and \S \ref{S:fd}:
\begin{enumerate}
\item[$\bullet$]  In \S \ref{S:par}, we define a Deligne--Langlands correspondence for irreducible genuine Iwahori-spherical representations $\pi$ of $\wt{G}$, i.e., we associate an enhanced parameter $(\phi_\pi, \tau_\pi)$ to such $\pi$, see Definition \ref{DLcor}. This is achieved in several steps.
First, the map $\pi \mapsto \pi^I$ gives a bijection between irreducible  genuine Iwahori-spherical representations of $\wt{G}$ and irreducible $\HH_{\bepsilon}(\wt{G}, I)$-modules. Denote this correspondence  by 
$$\pi_\sigma \leftrightarrow \sigma,$$
where $\sigma \in \Irr(\HH_{\bepsilon}(\wt{G}, I))$. Second, by using a distinguished genuine character $\theta$, one has an algebra isomorphism $\varphi_\theta: \HH_{\bepsilon}(\wt{G}, I) \simeq \HH(G_{Q,n}, I_{Q,n})$, where $G_{Q,n}$ is the linear algebraic group satisfying $G_{Q,n}^\vee \simeq \wt{G}^\vee$. By transport of action, one has an irreducible module $\sigma_\theta:=\varphi_\theta^*(\sigma)$ over $\HH(G_{Q,n}, I_{Q,n})$.
Then, using a Kazhdan--Lusztig--Reeder (KLR) parametrization for $\sigma_\theta$, one obtains an enhanced  $L$-parameter $(\phi_{\sigma_\theta}, \tau_{\sigma_\theta})$ with $\phi_{\sigma_\theta}$ valued in $G_{Q,n}^\vee \simeq \wt{G}^\vee$. Lastly, the distinguished genuine character $\theta$ also gives an injection $\wt{G}^\vee \into {}^L\wt{G}$, and thus one has an $L$-parameter $\phi_\pi$ of $\pi$ valued in ${}^L\wt{G}$. Similar consideration applies in order to define $\tau_\pi$ using $\tau_{\sigma_\theta}$, and this gives the sought Deligne--Langlands parameter $(\phi_\pi, \tau_\pi)$ of $\pi$. The parameter $(\phi_\pi, \tau_\pi)$ depends on a choice of the KLR parametrization for $G_{Q,n}$, but does not depend  on the choice of $\theta$. This latter independence on $\theta$ is shown in Proposition \ref{P:indep}.
\item[$\bullet$]  In \S \ref{S:fd}, we start with reducing the computation of $\deg(\pi_\sigma, \mu_{\wt{G}})^{-1}$ to a sum over the modified extended Weyl group $\wt{W}_{\rm ex}$, see Lemma \ref{L:2fdeg}. Here the measure $\mu_{\wt{G}}$ is such that $\mu_{\wt{G}}(\pmb{\delta}_n \times I) =1$. In fact, the right hand side of the equality in Lemma \ref{L:2fdeg}, in the linear algebraic case, is often taken as the definition of the inverse of the formal degree for discrete series representations of Iwahori--Hecke algebras. Let $\pi_{\sigma_\theta} \in \Irr(G_{Q,n})$ be the Iwahori-spherical representation associated with $\sigma_\theta$ given as above. In Proposition \ref{P:key-HII} we make a term by term comparison between $\deg(\pi_\sigma, \mu_{\wt{G}})^{-1}$ and $\deg(\pi_{\sigma_\theta}, \mu_{G_{Q,n}})^{-1}$.
By a change of Haar measures and invoking the fact that Hiraga--Ichino--Ikega formula for $\pi_{\sigma_\theta}$ is verified in \cite[Theorem 2]{FOS22} for the linear group $G_{Q,n}$ (with respect to the KLR parametrization for $\sigma_\theta$ given in \cite[Theorem 3.1]{FOS22} there), we immediately obtain Theorem \ref{0T:main}.
\end{enumerate}

In the last section \S \ref{S:Wdim}, we fix $G$ to be almost simple and simply-connected.  We compute for the $n$-fold ``oasitic covers" $\wt{G}$ of such $G$ the Whittaker dimension of the discrete series $\pi_\sigma$, when $\sigma$ is one-dimensional. The Whittaker dimension is a polynomial of $n$, as $n$ varies. This resonates with the fact that the wavefront sets of  genuine representations of a cover tend to be larger. Moreover, the wavefront sets of $\pi_\sigma$ are not dictated by their $L$-parameters, and this is in contrast with their formal degrees as shown in Theorem \ref{0T:main} above.

%%
% \subsection{Main result} 

\subsection{Acknowledgement} We would like to thank Yongqi Feng, Atsushi Ichino and Jiandi Zou for very helpful communications on the topics discussed here or on the content of the paper. The work of F. G. is partially supported by the National Key R\&D Program of China (No. 2022YFA1005300) and also by NNSFC-12171422.

%%%
\section{Covers and generalities on discrete series}

\subsection{Central covers} \label{SS:cov}
We retain the notation from the beginning of \S \ref{S:intro}. Let $G$ be the $F$-rational points of a semisimple group scheme $\mbf{G}$ defined over $O_F$. We assume that the geometric fiber (still denoted by) $\mbf{G}$ is split over $F$. Denote by 
$$(X,\ \Phi, \ \Delta; \ Y, \Phi^\vee, \Delta^\vee)$$
 the root datum of $G$, where $X$ is the character lattice and $Y$ the cocharacter lattice of a split torus $T\subset G$. Here $\Delta$ is a choice of simple roots, and $Y^{sc} \subset Y$ is the coroot sublattice and $X^{sc} \subset X$ the root lattice. Denote by $W$ the Weyl group of the coroot system.

Let $$Q: Y \longrightarrow \Z$$ be a $W$-invariant quadratic form, and let $B_Q(y, z):=Q(y+z) - Q(y) - Q(z)$ be the associated bilinear form. Assume that $F^\times$ contains the full group $\pmb{\delta}_n$ of $n$-th roots of unity.  For any pair $(D, \eta=\mbf{1})$ with $D$ being a ``bisector" of $Q$ (see \cite[\S 2.6]{GG}), one has an associated Brylinski--Deligne covering group  $\wt{G}_D^{(n)}$, which is a central extension
$$\begin{tikzcd}
\pmb{\delta}_n \ar[r, hook] & \wt{G}_D^{(n)} \ar[r, two heads, "{\mfr{q}}"] & G
\end{tikzcd}$$
of $G$ by $\pmb{\delta}_n$, for details see \cite{BD, GG, We6}. For simplicity, we will write $\wt{G}^{(n)}$ or even just $\wt{G}$ for $\wt{G}_D^{(n)}$. For every subset $H \subset G$ we denote $\wt{H}:=\mfr{q}^{-1}(H)$. For a fixed embedding
$$\epsilon: \pmb{\delta}_n \into \C^\times,$$
an $\epsilon$-genuine representation is one such that $\pmb{\delta}_n$ acts via $\epsilon$.

The complex dual group $\wt{G}^\vee$ of $\wt{G}$ is determined by its root datum
$$(Y_{Q,n}, \ \Delta_{Q,n}^\vee;\ X_{Q,n},\ \Delta_{Q,n}).$$
Here $Y_{Q,n} \subset Y$ is the character lattice of $\wt{G}^\vee$ given by 
$$Y_{Q,n}=\set{y\in Y: \ B_Q(y, z)\in n\Z \text{ for all } z \in Y},$$
and $X_{Q,n}:=\Hom_{\Z}(Y_{Q,n}, \Z)$. The set $\Delta_{Q,n}^\vee$ consists of the re-scaled simple coroots 
$$\alpha_{Q,n}^\vee:=n_\alpha \alpha^\vee:=\frac{n}{\gcd(n,Q(\alpha^\vee))} \alpha^\vee.$$
Let $Y_{Q,n}^{sc} \subset Y_{Q,n}$ be the sublattice spanned by $\Delta_{Q,n}^\vee$. 
We have
$$Z(\wt{G}^\vee) = \Hom(Y_{Q,n}/Y_{Q,n}^{sc}, \C^\times).$$
We also set $\alpha_{Q,n}:=n_\alpha^{-1}\alpha$ for every root $\alpha$, and thus $\Delta_{Q,n}:=\set{\alpha_{Q,n}}_{\alpha \in \Delta}$.

Let $\WD_F = W_F\times \SL_2(\C)$ be the Weil--Deligne group of $F$. In \cite{We6}, Weissman constructed the L-group ${}^L\wt{G}$ for $\wt{G}$, which is an extension
$$\begin{tikzcd}
\wt{G}^\vee \ar[r, hook] & {}^L\wt{G} \ar[r, two heads] & \WD_F.
\end{tikzcd}$$
Here ${}^L\wt{G}$ arises from an abelian central extension 
$$\begin{tikzcd}
Z(\wt{G}^\vee) \ar[r, hook] & E_{\wt{G}} \ar[r, two heads] & F^\times
\end{tikzcd}$$
via the push-out by the inclusion $Z(\wt{G}^\vee) \into \wt{G}^\vee$ and the pull-back via the reciprocity map $\WD_F \onto W_F \to F^\times$. One always has ${}^L\wt{G} \simeq \wt{G}^\vee \rtimes \WD_F$; equivalently, the extension of ${}^L\wt{G}$ splits, where one of such splittings is provided by the local Langlands correspondence for covering torus. However, in general it is not true that ${}^L\wt{G} \simeq \wt{G}^\vee \times \WD_F$. If the extension $E_{\wt{G}}$ splits over $F^\times$, then ${}^L\wt{G}$ is isomorphic to $\wt{G}^\vee \times \WD_F$; this is trivially true if $Z(\wt{G}^\vee)=\set{1}$, i.e., if $\wt{G}^\vee$ is of adjoint type.

\subsection{Distinguished genuine characters} \label{SS:dis}
Let $\wt{T} \subset \wt{G}$ be the covering torus of $\wt{G}$. We assume that there exists a ``distinguished" genuine central character $\theta: Z(\wt{T}) \to \C^\times$, first introduced by Savin \cite{Sav04}. See \cite[\S6]{GG} for a discussion on the necessary conditions for its existence. In particular, it is also shown in \cite[Theorem 6.6]{GG} that the set of all distinguished central characters of $Z(\wt{T})$, whenever exist, is a torsor over
$$\Hom(F^\times/F^{\times n}, Z^\heartsuit(\wt{G}^\vee)),$$
where 
$$Z^\heartsuit(\wt{G}^\vee):=\Hom(Y_{Q,n}/(nY + Y_{Q,n}^{sc}), \C^\times) \subset Z(\wt{G}^\vee).$$
Note that if $G$ is simply-connected and semisimple, then $nY \subset Y_{Q,n}^{sc}$ and thus $Z^\heartsuit(\wt{G}^\vee) = Z(\wt{G}^\vee)$.

Among several good properties of a distinguished genuine character $\theta$ are the following:
\begin{enumerate}
\item[(i)] It is always Weyl-invariant and satisfies
$$\theta(\wt{h}_\alpha(a^{n_\alpha})) =1$$
for every $a\in F^\times$ and $\alpha\in \Delta$. Here $\wt{h}_\alpha(a^{n_\alpha}) \in Z(\wt{T})$ is a natural lifting of the torus element $h_\alpha(a^{n_\alpha}) \in T$, defined using the Steinberg relations.
\item[(ii)] Every such $\theta$ gives rise to a splitting of $E_{\wt{G}}$ over $F^\times$ and thus an isomorphism
\begin{equation} \label{D:LGiso}
i_\theta: {}^L\wt{G} \simeq_\theta \wt{G}^\vee \times \WD_F.
\end{equation}
\end{enumerate} 
Relying on a nontrivial additive character $\psi: F\to \C^\times$ and thus the associated Weil index, a detailed construction of distinguished genuine characters is given in \cite[\S 7]{GG}. 
% In the tame setting when $p\nmid n$, we may assume also that such $\theta$ is unramified.

%%
\subsection{Discrete series and formal degrees} \label{SS:ds-fd}
For an embedding $\epsilon: \pmb{\delta}_n \into \C^\times$, we denote by $\Irr_\epsilon(\wt{G})$
the set of isomorphism classes of irreducible $\epsilon$-genuine representations of $\wt{G}$. 
For every $(\pi, V) \in \Irr_\epsilon(\wt{G})$, we have the contragredient representation $(\pi^\vee, V^\vee)$ which is realized by the smooth vectors in the algebraic dual of $V$. The action is specified by requiring
$$\angb{v}{\pi^\vee(g)(\lambda)} =\angb{\pi(g^{-1}) v}{\lambda}$$
for every  $v\in V$ and $\lambda \in V^\vee$, where $\angb{-}{-}: V \times V^\vee \to \C$ is the canonical pairing. In particular, we have
$$\pi^\vee \in \Irr_{\bepsilon}(\wt{G}),$$
where $\bepsilon=\epsilon^{-1}$ is the inverse of $\epsilon$.

For every pair $(v, \lambda) \in V \times V^\vee$, we have the function of matrix coefficient 
$$f_{v, \lambda}: \wt{G} \longrightarrow \C$$
given by $f_{v, \lambda}(g): =\angb{\pi(g^{-1}) v}{\lambda}$. 
Let $\mu_{\wt{G}}$ be a Haar measure of $\wt{G}$. A representation $\pi \in \Irr_\epsilon(\wt{G})$ is called a discrete series  if every matrix coefficient $f_{v, \lambda}$ of $\pi$ satisfies
$$\int_{\wt{G}} \val{f_{v, \lambda}(g)}^2 \mu_{\wt{G}}(g)  < \infty.$$
Let $\pi \in \Irr_\epsilon(\wt{G})$ be a discrete series representation. Then there exists $\deg(\pi, \mu_{\wt{G}}) \in \R_{>0}$ such that
\begin{equation} \label{E:fd} 
\int_{\wt{G}} f_{v_1, \lambda_1}(g) \cdot f_{v_2, \lambda_2}(g^{-1})\ \mu_{\wt{G}}(g) = \frac{ \angb{v_2}{\lambda_1}\cdot \angb{v_1}{\lambda_2}  }{ \deg(\pi, \mu_{\wt{G}}) }
\end{equation}
for all $v_i, \lambda_i$, see \cite[Chapitre IV3.3]{Ren10} or \cite[Page 5]{HC70}. In this case, the number $\deg(\pi, \mu_{\wt{G}})$ is called the formal degree of $\pi$ with respect to the measure $\mu_{\wt{G}}$. Note that for any $c\in \C^\times$,
$$\deg(\pi, c\cdot \mu_{\wt{G}}) = c^{-1} \cdot \deg(\pi, \mu_{\wt{G}}).$$

%%%
\section{Parameters for genuine Iwahori-spherical representations} \label{S:par}
The goal of this section is to define a Deligne--Langlands correspondence for irreducible genuine Iwahori-spherical representations of $\wt{G}$, by using a correspondence for linear algebraic groups already established by Kazhdan--Lusztig \cite{KL2} and Reeder \cite{Ree4}.
\subsection{Structure of $\mca{H}_{\bepsilon}(\wt{G}, I)$} \label{SS:HH}
Henceforth, we assume that $p\nmid n$ and fix a splitting $s_K$ of $\wt{G}$ over $K:=\mbf{G}(O_F)$. Let $I\subset K$ be the Iwahori subgroup associated with $\mbf{B}(\kappa_F)$, where $\kappa_F$ is the residue field of $F$. We view $K, I \subset \wt{G}$ via $s_K$.

Denote by $\mca{H}_{\bepsilon}(\wt{G}, I):=C_{\bepsilon, c}^\infty(I\backslash\wt{G}/I)$ the genuine Iwahori--Hecke algebra consisting of $\bepsilon$-genuine locally constant compactly supported functions $f$ on $\wt{G}$ which are $I$-biinvariant, i.e., $f$ satisfies
$$f(\zeta a g b) = \bepsilon(\zeta) \cdot f(g)$$
for all $\zeta \in \pmb{\delta}_n$ and $a, b\in I$. Throughout the paper, we fix the Haar measure $\mu_{\wt{G}}$ on $\wt{G}$ such that
$$\mu_{\wt{G}}(\pmb{\delta}_n \times I) =1.$$
The algebra multiplication on $\HH_{\bepsilon}(\wt{G}, I)$ is given by the convolution $f_1 * f_2$.

To recall the structure of $\HH_{\bepsilon}(\wt{G}, I)$, we consider
$$N_{Q,n}:= N_{\wt{G}}(\mbf{T}(O_F)),$$
the normalizer of $\mbf{T}(O_F)$ inside $\wt{G}$. 
The group $N_{Q,n}$ sits in a short exact sequence
\begin{equation}
\begin{tikzcd}
\pmb{\delta}_n \times \mbf{T}(O_F) \ar[r, hook] & N_{Q,n} \ar[r, two heads, "\wp"] & \wt{W}_{\rm ex}.
\end{tikzcd}
\end{equation}
where 
$$\wt{W}_{\rm ex}:=Y_{Q,n} \rtimes W$$
 is the modified extended affine Weyl group. It is known that the support of $\HH_{\bepsilon}(\wt{G}, I)$ is $I N_{Q,n} I$, see \cite{Sav04}.

Let $\theta  \in \Irr_\epsilon(Z(\wt{T}))$ be a Weyl-invariant distinguished genuine character as discussed in \S \ref{SS:dis}. In the tame setting when $p\nmid n$, we may further assume that $\theta$ is unramified, i.e., trivial on $Z(\wt{T}) \cap \mbf{T}(O_F)$. Following Savin \cite[Page 125]{Sav04}, to every $w\in \wt{W}_{\rm ex}$ one can naturally associate an element 
$$e_w^\theta \in \HH_{\bepsilon}(\wt{G}, I).$$
We recall the definition of $e_w^\theta$. First, since $\theta$ is $W$-invariant and trivial on the intersection of $\mbf{T}(O_F)$ and the subgroup of $K$ generated by $\set{\wt{w}_\alpha(-1): \alpha \in \Delta}$, we can extend $\theta$ to get
$$\theta: N_{Q,n} \longrightarrow \C^\times$$
such that $\theta( \wt{w}_\alpha(1))=1$ for every $\alpha \in \Delta$. Here $\wt{w}_\alpha(t) = \wt{e}_\alpha(t) \wt{e}_{-\alpha}(t^{-1}) \wt{e}_\alpha(t) \in \wt{G}$ is the element defined by using the canonical unipotent lifting $\wt{e}_\alpha(\cdot)$ of $e_\alpha(\cdot)$. For every $w\in \wt{W}_{\rm ex}$, let $\wt{w} := \wt{w}_{\alpha_1}(1) ... \wt{w}_{\alpha_k}(1) \in N_{Q,n}$ be the natural representative of $w=w_{\alpha_1} ... w_{\alpha_k}$ written in a minimal decomposition. 
Then one has a unique element $e_w^\theta \in \HH_{\bepsilon}(\wt{G}, I)$ supported on $\pmb{\delta}_n I \wt{w} I$ satisfying
\begin{equation} \label{e_w}
e_w^\theta(x) = 
\begin{cases}
\theta^{-1}(x) & \text{ if } x \in \wp^{-1}(w), \\
0 & \text{ if }  x \in N_{Q,n} - \wp^{-1}(w).
\end{cases}
\end{equation}

Let $W_{\rm ex}:=Y \rtimes W$ be the extended affine Weyl group associated with $G$. Recall the length function 
$$l_G: W_{\rm ex} \longrightarrow \N$$
that is explicitly given by 
\begin{equation} \label{l_G}
l_G(w)=\sum_{\alpha \in \Phi^+(s)} \val{\angb{\alpha}{y}} + \sum_{\alpha \in \Phi^-(s)} \val{\angb{\alpha}{y}+ 1},
\end{equation}
where $w=(y,s) \in W_{\rm ex}$ with $y\in Y, s\in W$ and $\Phi^+(s)$ (resp. $\Phi^-(s)$) consists of all positive roots such that $s^{-1}(\alpha)>0$ (resp. $s^{-1}(\alpha)<0$). For every $w_1, w_2\in \wt{W}_{\rm ex}$, we have
\begin{equation} \label{fac-e}
e_{w_1}^\theta * e_{w_2}^\theta = e_{w_1 w_2}^\theta
\end{equation}
whenever $l_G(w_1 w_2) = l_G(w_1) + l_G(w_2)$.

An element $y\in Y$ is called positive or dominant if $\angb{y}{\alpha} \gest 0$ for every simple root $\alpha$. The element $e_y^\theta \in \HH_{\bepsilon}(\wt{G}, I)$ is invertible for positive $y \in Y_{Q,n}$. Consider 
$$\rho_G =\frac{1}{2} \sum_{\alpha >0} \alpha,$$
the sum taken over all positive roots of $G$. 
For every $y\in Y_{Q,n}$, we write 
$$t_y^\theta:=q^{-\angb{y}{\rho_G}} e_{y_1}^\theta \cdot (e_{y_2}^\theta)^{-1},$$
where $y_1, y_2 \in Y_{Q,n}$ are any two positive elements such that $y= y_1 - y_2$. The elements in $\set{e_w^\theta: w\in W}$ and $\set{t_y^\theta: y\in Y_{Q,n}}$ then give the Bernstein presentation of $\HH_{\bepsilon}(\wt{G}, I)$ as follows, due to G. Savin \cite{Sav88, Sav04} (see also \cite{Mc1, GG}).

\begin{thm} \label{T:IH}
The Iwahori--Hecke algebra $\HH_{\bepsilon}(\wt{G}, I)$ has the following description:
$$\HH_{\bepsilon}(\wt{G}, I) = \big\langle t_y^\theta, e_{w_\alpha}^\theta:\  y\in Y_{Q,n}, \alpha^\vee \in \Delta^\vee \big\rangle$$
with relations given by
\begin{enumerate}
\item[(i)] $(e_{w_\alpha}^\theta-q)(e_{w_\alpha}^\theta+1)=0$.

\item[(ii)] $(e_{w_\alpha}^\theta e_{w_\beta}^\theta)^r=(e_{w_\beta}^\theta e_{w_\alpha}^\theta)^r$ if $w_\alpha w_\beta$ is of order $2r$. 

\item[(iii)] $(e_{w_\alpha}^\theta e_{w_\beta}^\theta )^r e_{w_\alpha}^\theta=(e_{w_\beta}^\theta e_{w_\alpha}^\theta )^r e_{w_\beta}^\theta$ if $w_\alpha w_\beta$ is of order $2r+1$.
\item[(iv)]  $t_y^\theta \cdot t_{y'}^\theta=t_{y+y'}^\theta$.
\item[(v)] Write $\langle y, \alpha\rangle=m n_\alpha$. Then
\begin{equation*}
    e_{w_\alpha}^\theta \cdot t_y^\theta =
    \begin{cases}
      t_{y^{w_\alpha}}^\theta \cdot e_{w_\alpha}^\theta + (q-1) \sum_{k=0}^{m-1} t_{y-kn_\alpha \alpha^\vee}^\theta & \text{if } m > 0,\\
      t_y^\theta \cdot e_{w_\alpha}^\theta & \text{if } m = 0,\\
      t_{y^{w_\alpha}}^\theta \cdot e_{w_\alpha}^\theta - (q-1) \sum_{k=0}^{1-m} t_{y+kn_\alpha \alpha^\vee}^\theta & \text{if } m < 0.
    \end{cases}
\end{equation*}
\end{enumerate}
In particular, $\HH_{\bepsilon}(\wt{G}, I)\simeq_\theta \C[Y_{Q,n}] \otimes_\C \mca{H}_W$ as $\C$-vector spaces, where the subalgebra $\mca{H}_W \subset \HH_{\bepsilon}(\wt{G}, I)$ is generated by $\set{e_{w_\alpha}^\theta: \alpha^\vee \in \Delta}$ and the commutative subalgebra $\C[Y_{Q,n}]$ generated by $\set{t_y^\theta: y\in Y_{Q,n}}$.
\end{thm}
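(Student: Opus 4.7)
The plan is to establish the theorem by first constructing a suitable $\C$-basis of $\HH_{\bepsilon}(\wt{G}, I)$ indexed by $\wt{W}_{\rm ex}$, then verifying the relations (i)--(v) in order of increasing complexity, and finally deducing the Bernstein tensor decomposition. First, I would use the fact that the support of $\HH_{\bepsilon}(\wt{G}, I)$ equals $I N_{Q,n} I$ together with the short exact sequence for $N_{Q,n}$ to show that the double-coset decomposition gives $\wt{G} \cap {\rm supp}(\HH_{\bepsilon}(\wt{G}, I)) = \bigsqcup_{w \in \wt{W}_{\rm ex}} \pmb{\delta}_n I \wt{w} I$. The $\bepsilon$-genuine condition forces a single function $e_w^\theta$ (up to scalar) to be supported on each such double coset, with value prescribed as in \eqref{e_w} precisely because $\theta$ extends to $N_{Q,n}$ with $\theta(\wt{w}_\alpha(1)) = 1$. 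This gives linear independence and spanning, so $\set{e_w^\theta : w \in \wt{W}_{\rm ex}}$ is a $\C$-basis.

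Next, I would verify the braid-type relations (ii) and (iii) directly from the multiplication law \eqref{fac-e}, since both sides equal $e_{w_\alpha w_\beta \cdots}^\theta$ when the Weyl-word length is additive, as it is for the standard braid words. For the quadratic relation (i), the length-additivity fails when multiplying $e_{w_\alpha}^\theta$ by itself, so one writes $e_{w_\alpha}^\theta * e_{w_\alpha}^\theta$ as a sum over $I$-coset representatives of $I \wt{w}_\alpha I \wt{w}_\alpha I$ and uses the rank-one computation inside the $\SL_2$-copy associated with $\alpha$; the cardinalities are governed by $q$ and the normalization $\theta(\wt{w}_\alpha(1)) = 1$ ensures the classical coefficients $(q-1)$ and $q$ appear without covering corrections. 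Relation (iv) follows from the definition of $t_y^\theta$ once one knows that for positive $y_1, y_2 \in Y_{Q,n}$ with $y = y_1 - y_2$, the product $e_{y_1}^\theta * (e_{y_2}^\theta)^{-1}$ is independent of the chosen decomposition; this independence is a direct consequence of \eqref{fac-e} applied to pairs of positive elements, which commute up to an explicit $q$-power by length-additivity.

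The bulk of the work is the Bernstein--Lusztig relation (v), which I would prove in two steps. First, reduce to the rank-one case $y = \alpha_{Q,n}^\vee$: once (v) is established for all fundamental coweights/coroots in $Y_{Q,n}$ interacting with a fixed simple reflection $w_\alpha$, the general formula follows by induction on the length $l_G(y)$ together with (iv) and the braid relations. Second, carry out the rank-one computation inside the $\SL_2$ (or rather $\SL_2^{(n)}$) subgroup corresponding to $\alpha$: here $n_\alpha = n/\gcd(n, Q(\alpha^\vee))$ intervenes because $Y_{Q,n}$ intersects the coroot line in $n_\alpha \Z \alpha^\vee$, so the decomposition of $\wt{w}_\alpha \cdot \wt{h}_\alpha(\varpi^{m n_\alpha})$ inside $I \wt{W}_{\rm ex} I$ produces exactly $m$ intermediate cosets (instead of $\angb{y}{\alpha}$ in the linear case), each contributing a $t_{y - k n_\alpha \alpha^\vee}^\theta$ with coefficient $(q-1)$ after using $\theta(\wt{h}_\alpha(a^{n_\alpha})) = 1$ to kill covering cocycles. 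This is the step I expect to be the main obstacle, since one must track the Steinberg-relation cocycles carefully to confirm that the distinguished character $\theta$ absorbs them exactly.

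Finally, to obtain the vector space isomorphism $\HH_{\bepsilon}(\wt{G}, I) \simeq_\theta \C[Y_{Q,n}] \otimes_\C \mca{H}_W$, I would argue that the elements $\set{t_y^\theta \cdot e_w^\theta : y \in Y_{Q,n}, w \in W}$ form a $\C$-basis: spanning follows because every $e_v^\theta$ for $v \in \wt{W}_{\rm ex}$ can be rewritten using \eqref{fac-e} and relations (iv)--(v) as a $\C$-linear combination of such products, while linear independence follows from the basis property of $\set{e_w^\theta}$ established at the outset together with the triangularity of the change-of-basis matrix with respect to the length filtration on $\wt{W}_{\rm ex}$. The subscript $\theta$ on the isomorphism simply records that the identification depends on the chosen distinguished genuine character used to build $e_w^\theta$ and $t_y^\theta$.
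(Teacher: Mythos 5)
The paper does not actually prove Theorem \ref{T:IH}: it is quoted as a known result of Savin (\cite{Sav88, Sav04}, see also \cite{Mc1, GG}), so there is no in-text argument to compare against. Your outline is, in essence, a reconstruction of the strategy of those references: Iwahori--Matsumoto-type basis $\set{e_w^\theta}_{w\in \wt{W}_{\rm ex}}$ from the support statement ${\rm supp}\,\HH_{\bepsilon}(\wt{G},I)=IN_{Q,n}I$, quadratic and braid relations by rank-one coset counting, well-definedness of $t_y^\theta$, the Bernstein--Lusztig commutation relation by reduction to rank one, and the tensor decomposition by a triangularity argument. The skeleton is sound and correctly locates the covering-specific content: the lattice $Y_{Q,n}$ meets $\Z\alpha^\vee$ in $n_\alpha\Z\alpha^\vee$, which is exactly why $m=\angb{y}{\alpha}/n_\alpha$ and the shifts $n_\alpha\alpha^\vee$ appear in (v), and the distinguished character's property $\theta(\wt{h}_\alpha(a^{n_\alpha}))=1$ is what neutralizes the Steinberg cocycles in that computation.

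Two caveats. First, the step you flag as ``the main obstacle'' --- the rank-one verification of (v) with explicit tracking of the covering cocycles, together with the well-definedness of the representative $\wt{w}=\wt{w}_{\alpha_1}(1)\cdots\wt{w}_{\alpha_k}(1)$ independently of the reduced word --- is precisely where all the nontrivial work of Savin's proof lives; as written your proposal is a plan for that step rather than a proof of it, so the argument is incomplete exactly where it matters. Second, a small imprecision: for dominant $y_1,y_2\in Y_{Q,n}$ one has $l_G(y_1+y_2)=l_G(y_1)+l_G(y_2)$, so \eqref{fac-e} gives $e_{y_1}^\theta * e_{y_2}^\theta = e_{y_1+y_2}^\theta = e_{y_2}^\theta * e_{y_1}^\theta$ on the nose; the dominant elements commute exactly, and the $q$-power enters only through the normalization $q^{-\angb{y}{\rho_G}}$ in the definition of $t_y^\theta$, not as a commutation defect. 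With that correction, the well-definedness of $t_y^\theta$ and relation (iv) follow as you say.
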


Let $G_{Q,n}$ be the linear algebraic group whose root datum is given by
$$(X_{Q,n}, \Phi_{Q,n}; Y_{Q,n}, \Phi_{Q,n}^\vee).$$
In particular, we have 
$$G_{Q,n}^\vee \simeq \wt{G}^\vee,$$
where $G_{Q,n}^\vee$ is the Langlands dual of $G_{Q,n}$. Note that $\wt{W}_{\rm ex} = Y_{Q,n} \rtimes W$ is then the extended affine Weyl group for $G_{Q,n}$. Also, $G_{Q,n}$ has a natural choice of simple roots and simple coroots inherited from the choice of $\Delta$.
 Let $I_{Q,n} \subset G_{Q,n}$ be the Iwahori subgroup of $G_{Q,n}$, similar as $I \subset G$.
In the Iwahori--Hecke algebra $\mathcal{H}(G_{Q,n}, I_{Q,n})$, one has the natural element
$$E_w \in \mca{H}(G_{Q,n}, I_{Q,n})$$ for every 
$w\in \wt{W}_{\rm ex}$, as an analogue of $e_w^\theta \in \HH_{\bepsilon}(\wt{G}, I)$. Consider
$$\rho_{G_{Q,n}} = \frac{1}{2} \sum_{\alpha_{Q,n}>0} \alpha_{Q,n},$$
the half sum of positive roots of $G_{Q,n}$. For every $y\in Y_{Q,n}$, we define
$$T_y:= q^{-\angb{y}{\rho_{G_{Q,n}}}} E_{y_1} \cdot E_{y_2}^{-1},$$
where $y= y_1 - y_2$ with $y_1, y_2 \gest 0$. Then 
$$\mca{H}(G_{Q,n}, I_{Q,n}) \simeq \C[Y_{Q,n}] \otimes_\C \mca{H}_W$$
and is generated by $\set{E_{w_\alpha}: \alpha^\vee \in \Delta}$ and $\set{T_y: y\in Y_{Q,n}}$ with relations exactly in parallel with Theorem \ref{T:IH} (i)--(v). Thus, depending on the choice of the distinguished genuine character $\theta$ of $Z(\overline{T})$, we have an algebra isomorphism
\begin{equation} \label{D:iso-t}
\varphi_\theta: \HH_{\bepsilon}(\wt{G},  I) \longrightarrow \mca{H}(G_{Q,n}, I_{Q,n})
\end{equation}
given by 
$$\varphi_\theta(e_{w_\alpha}^\theta) = E_{w_\alpha} \text{ and } \varphi_\theta(t_y^\theta) = T_y.$$
This induces a natural map
\begin{equation} \label{E:phi*}
\varphi_\theta^*: \Irr(\HH_{\bepsilon}(\wt{G},  I)) \longrightarrow \Irr(\mca{H}(G_{Q,n}, I_{Q,n}))
\end{equation}
of irreducible left modules over the two algebras.

\subsection{LLC for $\Irr_\epsilon(\wt{G})^I$}
Let 
$$\Irr_\epsilon(\wt{G})^I \subset \Irr_\epsilon(\wt{G})$$
be the subset of irreducible genuine Iwahori-spherical representations $\pi$ of $\wt{G}$, i.e., $\pi^I \ne \set{0}$. 
By Borel and Casselman, one has a natural bijection
\begin{equation} \label{msc-E}
\msc{E}: \Irr_\epsilon(\wt{G})^I \longrightarrow \Irr_L(\mca{H}_{\bepsilon}(\wt{G}, I))
\end{equation}
given by 
$$\pi \mapsto \pi^I,$$
see \cite{Sav04, FlKa86} for a discussion for covers.

We will use $\varphi_\theta^*$ in \eqref{E:phi*} to transport a local Langlands correspondence for Iwahori-spherical representations of $G_{Q,n}$ to the cover $\wt{G}$. Thus, we recall a Deligne--Langlands correspondence
$$\mca{L}_{G_{Q,n}}: \Irr(\mca{H}(G_{Q,n}, I_{Q,n})) \longrightarrow {\rm Par}^{\rm DL}(\WD_F, G_{Q,n}^\vee)$$
established by Kazhdan--Lusztig \cite{KL2} for $G_{Q,n}$ with connected center, and extended to the general case by Reeder \cite{Ree4}. See also \cite{ABPS17} for a discussion of the local Langlands correspondence for constituents of (not necessarily unramified) principal series of a linear algebraic group. Here ${\rm Par}^{\rm DL}(\WD_F, G_{Q,n}^\vee)$ consists of pairs $(\phi,  \tau)$, where
\begin{equation} \label{E:phi}
\phi: \WD_F \longrightarrow G_{Q,n}^\vee
\end{equation}
is a ($G_{Q,n}^\vee$-conjugacy class of) group homomorphism that is trivial on the inertial subgroup $I_F \subset W_F$, and $\tau \in \Irr(\mca{S}_\phi)$ with 
$$\mca{S}_\phi: = \pi_0({\rm Cent}_{G_{Q,n}^\vee}(\phi)/Z(G_{Q,n}^\vee)).$$
Such an $\mca{L}_{G_{Q,n}}$ is injective and satisfies several natural properties. A pair $(\phi, \tau)$ lies in the image of $\mca{L}_{G_{Q,n}}$ if and only if $\tau$ occurs in certain homology space given as follows. 

Let $\mfr{B}$ denote the variety of Borel subgroups of $G_{Q,n}^\vee$, and let $\mfr{B}^\phi \subset \mfr{B}$ denote the subvariety consisting of Borel subgroups of $G_{Q,n}^\vee$ containing 
$$\phi(W_F \times B_2),$$
where $B_2 \subset \SL_2(\C)$ is the Borel subgroup of upper triangular matrices. The group ${\rm Cent}_{G_{Q,n}^\vee}(\phi)$ acts naturally on $\mfr{B}^\phi$ and thus on the homology $H_*(\mfr{B}^\phi, \C)$. The action factors through $\mca{S}_\phi$ and gives a representation of $\mca{S}_\phi$ on $H_*(\mfr{B}^\phi, \C)$. The requirement on $\tau \in \Irr(\mca{S}_\phi)$ such that $(\phi, \tau)$ lies in the image of $\mca{L}_{G_{Q,n}}$ is that it appears in $H_*(\mfr{B}^\phi, \C)$: such $\tau$ is called geometric.

Consider
$${\rm Par}^{\rm DL}_{\rm geo}(\WD_F, G_{Q,n}^\vee):=\set{(\phi, \tau): \ \tau \text{ is geometric}} \subset {\rm Par}^{\rm DL}(\WD_F, G_{Q,n}^\vee).$$
Then one has a bijection
\begin{equation} \label{E:L-DL}
\mca{L}_{G_{Q,n}}^{\rm DL}: \Irr(\mca{H}(G_{Q,n}, I_{Q,n})) \longrightarrow {\rm Par}^{\rm DL}_{\rm geo}(\WD_F, G_{Q,n}^\vee)
\end{equation}
arising from $\mca{L}_{G_{Q,n}}$.

We also recall the set  ${\rm Par}^{\rm KLR}(G_{Q,n}^\vee)$ of Kazhdan--Lusztig--Reeder (KLR) parameters, which are often convenient to compute. Consider the pair $(s, u)$, where $s \in T_{Q,n}^\vee$ and $u \in G_{Q,n}^\vee$ is unipotent such that 
$$s u s^{-1} = u^q.$$
We have the component group
$$\mca{S}_{s, u}:=\pi_0({\rm Cent}_{G_{Q,n}^\vee}(s, u)/Z(G_{Q,n}^\vee))$$
of $s, u$. A KLR parameter is just $(s, u, \tau)$ with $\tau \in \Irr(\mca{S}_{s, u})$. Let $\mfr{B}^{s, u} \subset \mfr{B}$ be the subvariety consisting of all Borel subgroups of $G_{Q,n}^\vee$ that contain both $s$ and $u$. Again,  ${\rm Cent}_{G_{Q,n}^\vee}(s, u)$ acts naturally on $H_*(\mfr{B}^{s, u}, \C)$ and the action factors through $\mca{S}_{s, u}$. A representation $\tau \in \Irr(\mca{S}_{s, u})$ is called geometric if it appears in $H_*(\mfr{B}^{s, u}, \C)$.
This gives the subset
$${\rm Par}^{\rm KLR}_{\rm geo}(G_{Q,n}^\vee) \subset {\rm Par}^{\rm KLR}(G_{Q,n}^\vee)$$
consisting of $(s, u, \tau)$ where $\tau$ is geometric.

The map
$$\phi \mapsto (s_\phi,  u_\phi),$$
where 
$$s_\phi:= \phi\left(\varpi, \left(\begin{array}{cc} q^{1/2} & 0 \\ 0 & q^{-1/2} \end{array}\right) \right) \text{ and } u_\phi:=\phi \left(1, \left(\begin{array}{cc} 1 & 1 \\ 0 & 1 \end{array}\right) \right),$$
gives a bijection between the $\phi$ as in \eqref{E:phi} and such pair $(s, u)$.  If $\phi$ corresponds to $(s, u)$ as above, then 
$$\mca{S}_{s, u} \simeq \mca{S}_\phi.$$
Thus, one has a bijection
$${\rm Par}^{\rm DL}_{\rm geo}(\WD_F, G_{Q,n}^\vee) \longrightarrow  {\rm Par}^{\rm KLR}_{\rm geo}(G_{Q,n}^\vee)$$
given by $(\phi, \tau) \mapsto (s_\phi, u_\phi, \tau)$. Accordingly, one has a bijection
$$\mca{L}_{G_{Q,n}}^{\rm KLR}: \Irr(\mca{H}(G_{Q,n}, I_{Q,n})) \longrightarrow {\rm Par}^{\rm KLR}_{\rm geo}(G_{Q,n}^\vee).$$
In fact, $\mca{L}_{G_{Q,n}}^{\rm KLR}$ was the form established by Kazhdan--Lusztig \cite{KL2} and Reeder \cite{Ree4}.

Since $G_{Q,n}^\vee \simeq \wt{G}^\vee$, we can view 
$$(\phi, \tau) \in {\rm Par}^{\rm DL}(\WD_F, \wt{G}^\vee)$$
and want to define an analogue of such parameters valued in ${}^L \wt{G}$ instead of $\wt{G}^\vee$. 
For every $\xi: \WD_F \longrightarrow {}^L\wt{G}$, we define
$$\mca{S}_{\xi} := \pi_0({\rm Cent}_{{}^L \wt{G} }(\xi)/Z(\wt{G}^\vee)).$$
Also, denote by ${\rm Par}^{\rm DL}(\WD_F, {}^L \wt{G})$ the set of pairs $(\xi, \gamma)$ where $\xi$ is a group homomorphism as above and $\gamma \in \Irr(\mca{S}_\xi)$.

% \textcolor{red}{STOPPED HERE}

Recall the isomorphism
$$i_\theta: {}^L \wt{G} \simeq  \wt{G}^\vee \times \WD_F $$
from \eqref{D:LGiso}, which then gives an embedding of $\wt{G}^\vee \into {}^L \wt{G}$ given by $g \mapsto i_\theta^{-1}(g, 1)$, and thus a parameter
$$i_\theta(\phi): \WD_F \longrightarrow {}^L\wt{G}$$
given by
$$i_\theta(\phi)(a)=i_\theta^{-1}(\phi(a), a).$$
It is easy to see that one has 
$$\mca{S}_{i_\theta(\phi)} \simeq \mca{S}_{\phi}.$$
By transport of structure, this gives a representation $i_\theta(\tau) \in \Irr(\mca{S}_{i_\theta(\phi)})$ for every $\tau \in \Irr(\mca{S}_\phi)$. We thus have a map
\begin{equation} \label{E:i*}
i_\theta^*: {\rm Par}^{\rm DL}(\WD_F, \wt{G}^\vee) \longrightarrow {\rm Par}^{\rm DL}(\WD_F, {}^L\wt{G})
\end{equation}
given by
$$i_\theta^*(\phi, \tau):=(i_\theta(\phi), i_\theta(\tau)).$$

Consider the following composite
\begin{equation}
\begin{tikzcd}
\mca{L}_{\wt{G}}^{\rm DL}=i_\theta^* \circ \mca{L}_{G_{Q,n}}^{\rm DL} \circ \varphi_\theta^* \circ \msc{E}: \Irr_\epsilon(\wt{G})^I \ar[r] & {\rm Par}^{\rm DL}(\WD_F, {}^L\wt{G}),
\end{tikzcd}
\end{equation}
where $\msc{E}, \varphi_\theta^*, \mca{L}_{G_{Q,n}}^{\rm DL}$ and $i_\theta^*$ are given in \eqref{msc-E}, \eqref{E:phi*}, \eqref{E:L-DL} and \eqref{E:i*} respectively. Note that $\mca{L}^{\rm DL}_{G_{Q,n}}$ is not completely canonical, and the simplest such example arises from the unitary unramified principal series $I(\chi)$ of $\SL_2$ that is reducible. For more discussions on this issue, see \cite[\S 14]{ABPS17}. Thus, $\mca{L}_{\wt{G}}^{\rm DL}$ is not completely canonical. However, the dependence on $\theta$ in the definition of $\mca{L}_{\wt{G}}^{\rm DL}$ can be eliminated, as follows.

\begin{prop} \label{P:indep}
For a fixed Deligne--Langlands correspondence $\mca{L}_{G_{Q,n}}^{\rm DL}$ of $G_{Q,n}$, the map $\mca{L}_{\wt{G}}^{\rm DL}$ is independent of the choice of any unramified distinguished genuine character $\theta$.
\end{prop}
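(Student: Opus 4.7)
The plan is to show that when $\theta$ is replaced by another unramified distinguished genuine character $\theta'$, the resulting changes in $\varphi_\theta^\ast$ and in $i_\theta^\ast$ compensate each other, leaving $\mca{L}_{\wt{G}}^{\rm DL}$ unchanged. Since $\msc{E}$ and the fixed $\mca{L}_{G_{Q,n}}^{\rm DL}$ are $\theta$-independent, only these two ingredients need to be tracked. The first step is to use the torsor structure \cite[Theorem 6.6]{GG}: for any two unramified distinguished genuine characters, one has $\theta' = \theta \cdot \chi$ where $\chi$ is the unramified character of $F^\times/F^{\times n}$ valued in $Z^\heartsuit(\wt{G}^\vee) \subset Z(\wt{G}^\vee)$ determined by $z := \chi(\varpi)$.

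Next I would compute the effect on $\varphi_\theta$. Extending $\chi$ to $N_{Q,n}$ by $\chi(\wt{w}_\alpha(1)) = 1$ so that Savin's normalization is preserved for $\theta'$ as well, the defining formula \eqref{e_w} gives $e_y^{\theta'} = z(y)^{-1} \cdot e_y^\theta$ for $y \in Y_{Q,n}$ and $e_{w_\alpha}^{\theta'} = e_{w_\alpha}^\theta$, whence $t_y^{\theta'} = z(y)^{-1} t_y^\theta$ by the definition preceding Theorem \ref{T:IH}. Thus $\varphi_{\theta'} \circ \varphi_\theta^{-1}$ is the algebra automorphism $\alpha_z$ of $\mca{H}(G_{Q,n}, I_{Q,n})$ sending $T_y \mapsto z(y)^{-1} T_y$ and $E_{w_\alpha} \mapsto E_{w_\alpha}$, so $\varphi_{\theta'}^\ast \msc{E}(\pi) = \alpha_z^\ast(\varphi_\theta^\ast \msc{E}(\pi))$. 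Under the Kazhdan--Lusztig--Reeder description of $\mca{L}_{G_{Q,n}}^{\rm DL}$, pullback along $\alpha_z$ is precisely the unramified central twist by $z^{-1}$ on the semisimple part of the parameter; consequently, writing $(\phi_{\sigma_\theta}, \tau_{\sigma_\theta}) := \mca{L}_{G_{Q,n}}^{\rm DL} \circ \varphi_\theta^\ast \msc{E}(\pi)$, one has $\phi_{\sigma_{\theta'}} = \chi_z^{-1} \cdot \phi_{\sigma_\theta}$, where $\chi_z: W_F \to Z(\wt{G}^\vee)$ is the unramified character sending Frobenius to $z$, while $\tau_{\sigma_{\theta'}} = \tau_{\sigma_\theta}$ under the canonical identification $\mca{S}_{\phi_{\sigma_{\theta'}}} = \mca{S}_{\phi_{\sigma_\theta}}$ induced by the central twist.

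Finally, I would track the change in $i_\theta$. By the construction of $E_{\wt{G}}$ in \cite{We6} and the passage from a distinguished character to an $F^\times$-splitting recalled in \S\ref{SS:dis}, the splittings of $E_{\wt{G}}$ induced by $\theta$ and $\theta'$ differ precisely by $\chi_z$, so $i_{\theta'}$ equals $i_\theta$ post-composed with the automorphism $(g, w) \mapsto (g \cdot \chi_z(w), w)$ of $\wt{G}^\vee \times \WD_F$, where $\chi_z(w)$ denotes the value of $\chi_z$ on the image of $w$ under the reciprocity map. Combining this with Step 2 yields
$$i_{\theta'}(\phi_{\sigma_{\theta'}})(w) = i_\theta^{-1}\bigl(\chi_z^{-1}(w) \phi_{\sigma_\theta}(w) \cdot \chi_z(w),\, w\bigr) = i_\theta(\phi_{\sigma_\theta})(w),$$
and the same cancellation on the $\tau$-side via the induced identification of component groups, proving the independence. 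The main obstacle is the sign-tracking in this last step: verifying that the way $\chi_z$ enters in $i_{\theta'}/i_\theta$ is exactly the inverse of the way it enters in $\phi_{\sigma_{\theta'}}/\phi_{\sigma_\theta}$. This reduces to a compatibility between Savin's explicit construction of distinguished characters via Weil indices (\cite[\S 7]{GG}) and Weissman's functorial construction of the $L$-group extension, and ultimately amounts to the assertion that both dependencies on $\theta$ arise from the same natural pairing with $z \in Z(\wt{G}^\vee)$; once this is pinned down, the cancellation is formal.
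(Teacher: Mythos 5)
Your argument is essentially the paper's own proof: both reduce the question to the two $\theta$-dependent ingredients, identify $\varphi_{\theta'}\circ\varphi_\theta^{-1}$ as a central twist on the Bernstein generators $T_y$, invoke the behaviour of the Kazhdan--Lusztig--Reeder parametrization under such twists (the paper cites \cite[Page 843, (114)]{ABPS17}), and cancel against the corresponding change of the splitting $i_\theta$; the $\tau$-side is handled by the induced identification of component groups in both treatments. One bookkeeping correction: your own formula $t_y^{\theta'}=z(y)^{-1}t_y^\theta$ forces $(\varphi_{\theta'}\circ\varphi_\theta^{-1})(T_y)=z(y)\,T_y$, not $z(y)^{-1}T_y$, so the twist on the Satake part is by $z$ rather than $z^{-1}$ (the paper gets $s_{\theta'}=s_\theta\cdot z$); as written, your sign in Step 2 combined with your Step 3 relation $i_{\theta'}=\bigl((g,w)\mapsto(g\chi_z(w),w)\bigr)\circ i_\theta$ would produce a residual factor $\chi_z^{-2}$ instead of cancelling, whereas with the corrected sign the cancellation is exactly the paper's. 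The compatibility you flag as the ``main obstacle'' --- that the twist of $i_\theta$ is genuinely inverse to the twist of the parameter --- is the commutativity of the diagram relating $i_\theta$ and $i_{\theta'}$, which the paper also asserts without detailed verification.
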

\begin{proof}
 We first show that the ${}^L\wt{G}$-valued L-parameter associated with $\pi$ via $\mca{L}_{\wt{G}}^{\rm DL}$ is independent of $\theta$.

Suppose we have unramified distinguished genuine characters $\theta_j$ of $Z(\wt{T})$ with $1\lest j \lest 2$, and thus also KLR parameter $(s_{\theta_i}, u_{\theta_i}, \tau_{\theta_i})$ relative to $\theta_i$. Consider the isomorphism 
$$\varphi= \varphi_{\theta_2} \circ \varphi_{\theta_1}^{-1}: \HH(G_{Q,n}, I_{Q,n}) \longrightarrow \HH(G_{Q,n}, I_{Q,n}).$$ 
To understand the relation between $(s_{\theta_1}, u_{\theta_1}, \tau_{\theta_1})$ and $(s_{\theta_2}, u_{\theta_2}, \tau_{\theta_2})$, it suffices to look at the effect of $\varphi$ on $\mca{L}_{G_{Q,n}}^{\rm DL}$.   By the definition of $e_w^{\theta_i} \in \HH_{\bepsilon}(\wt{G}, I)$ we see that 
$$\varphi(E_w) =E_w$$
if $w\in W$. On the other hand, for $y\in Y_{Q,n}$ dominant, we have
$$e_y^{\theta_1} =\frac{\theta_2}{\theta_1}(\varpi^y) \cdot e_y^{\theta_2} \in \HH_{\bepsilon}(\wt{G}, I).$$
If $y=x-z$ with positive $x, z\in Y_{Q,n}$, then
$$q^{\angb{y}{\rho_G}} \cdot t_y^{\theta_1} = e_x^{\theta_1} \cdot (e_z^{\theta_2})^{-1} = \frac{\theta_2}{\theta_1}(\varpi^x) \cdot e_x^{\theta_2} \cdot \left( \frac{\theta_2}{\theta_1}(\varpi^z) \cdot e_z^{\theta_2} \right)^{-1} = \frac{\theta_2}{\theta_1}(\varpi^y) \cdot t_y^{\theta_2} \cdot q^{\angb{y}{\rho_G}}.$$
Thus, $t_y^{\theta_1}=(\theta_2/\theta_1)(\varpi^y) \cdot t_y^{\theta_2}$. Then we get
$$\varphi(T_y) = (\theta_2/\theta_1)(\varpi^y) \cdot T_y$$
for every $y\in Y_{Q,n}$. Associated with $\theta_2/\theta_1$ is an element
$$z_{\theta_2/\theta_1} \in Z(G_{Q,n}^\vee)\simeq Z(\wt{G}^\vee)$$
given by $z_{\theta_2/\theta_1}(y):= (\theta_2/\theta_1)(\varpi^y)$ for every $y \in Y_{Q,n}$.
Consider the map 
$$\varphi^*: \Irr(\HH(G_{Q,n}, I_{Q,n})) \longrightarrow \Irr(\HH(G_{Q,n}, I_{Q,n}))$$
 induced from $\varphi$. Let $\sigma(s, u, \tau):= (\mca{L}_{G_{Q,n}}^{\rm KLR})^{-1}(s, u, \tau) \in \Irr(\HH(G_{Q,n}, I_{Q,n}))$ be associated with any KLR parameter $(s, u, \tau) \in {\rm Par}^{\rm KLR}_{\rm geo}(G_{Q,n}^\vee)$.
 Then we have
\begin{equation} \label{ch-par}
\varphi^*(\sigma(s, u, \tau)) = \sigma(s\cdot z_{\theta_2/\theta_1}, u, \tau),
\end{equation}
see \cite[Page 843, (114)]{ABPS17}. This shows that
\begin{equation} \label{rel-par}
s_{\theta_2} = s_{\theta_1} \cdot z_{\theta_2/\theta_1},\quad u_{\theta_2} = u_{\theta_1}, \quad \tau_{\theta_2} = \tau_{\theta_1}.
\end{equation}

Denote by 
$$\phi_{\theta_2/\theta_1}: \WD_F \to Z(\wt{G}^\vee) \into \wt{G}^\vee$$
the unique homomorphism that is trivial on $\SL_2(\C)$ and $I_F \subset W_F$, and such that  $\phi_{\theta_2/\theta_1}({\rm Frob}) =z_{\theta_2/\theta_1}$. It is easy to see that the diagram
$$\begin{tikzcd}
\wt{G}^{\vee} \times \WD_F \ar[r, "{i_{\theta_1}}"] \ar[d, "f"] & {}^L\wt{G} \ar[d, equal] \\
\wt{G}^{\vee} \times \WD_F \ar[r, "{i_{\theta_2}}"] & {}^L\wt{G}
\end{tikzcd}
$$
 commutes, where $f(g, x):=(g\cdot \phi_{\theta_2/\theta_1}(x), x)$. Combining the relations in \eqref{rel-par} and the above diagram, it follows that
$$i_{\theta_1}(\phi_{s_{\theta_1}, u_{\theta_1}}) = i_{\theta_2}(\phi_{s_{\theta_2}, u_{\theta_2}}).$$
This shows that the ${}^L\wt{G}$-valued L-parameter associated with $\pi$ via $\mca{L}_{\wt{G}}^{\rm DL}$ is independent of $\theta$, and we write it as $\phi_\pi$.

From the above discussion we have canonical isomorphisms
\begin{equation} \label{E:S1}
\mca{S}_{s_{\theta_1}, u} \simeq \mca{S}_{\phi_\pi} \simeq \mca{S}_{s_{\theta_2}, u}.
\end{equation}
Also, $\tau_{\theta_1}$ and $\tau_{\theta_2}$ correspond to each other via the map induced from \eqref{E:S1}. Hence, they gives rise to the same element in $\Irr(\mca{S}_{\phi_\pi})$.

In view of the equivalence between $\mca{L}_{G_{Q,n}}^{\rm DL}$ and $\mca{L}_{G_{Q,n}}^{\rm KLR}$, the above completes the proof.
\end{proof}

\begin{dfn} \label{DLcor}
For fixed $\mca{L}_{G_{Q,n}}^{\rm DL}$, we call
$$\mca{L}_{\wt{G}}^{\rm DL}: \Irr_\epsilon(\wt{G})^I \longrightarrow {\rm Par}^{\rm DL}(\WD_F, {}^L\wt{G}), \quad \pi \mapsto (\phi_\pi, \tau_\pi)$$
the Deligne--Langlands correspondence for the irreducible genuine Iwahori-spherical representations of $\wt{G}$ associated with $\mca{L}_{G_{Q,n}}^{\rm DL}$.
\end{dfn}

As in the linear algebraic case, one can also  define the Kazhdan--Lusztig--Reeder parameter $(s_\pi, u_\pi, \tau_\pi)$ associated with $(\phi_\pi, \tau_\pi)$, now with $s\in {}^L\wt{G}$ instead of in $\wt{G}^\vee$. 
By the local Langlands correspondence for covering torus (see \cite[\S 15]{We6} or \cite[\S 8]{GG}), one has the ``absolute" Satake parameter 
$$s_\chi:=\phi_\chi({\rm Frob}) \in {}^L \wt{T} \subset {}^L \wt{G},$$
where $\phi_\chi: W_F \to {}^L\wt{T}$ is the $L$-parameter associated with $\chi$. 
Let $(s_\theta, u_\theta, \tau_\theta)$ be the KLR parameter relative to $\theta$.  Then $s_\theta \in \wt{T}^\vee$ is the ``relative" Satake parameter of $s_\chi$, i.e., the one obtained from projecting $i_{\theta}(s_\chi)$ to the $\wt{G}^\vee$-component.  
The sought $(s_\pi, u_\pi, \tau_\pi)$ is just $(s_\chi, u_\theta, \tau_\theta)$, where the independence of $u_\theta$ and $\tau_\theta$ on $\theta$ is clear from the proof of Proposition \ref{P:indep}.

\subsection{L-functions and gamma-factors} \label{SS:gamma}
Let 
$$R: {}^L \wt{G} \longrightarrow \GL(V)$$
be a finite dimensional algebraic representation afforded on a finite-dimensional complex vector space $V$, i.e., its restriction to $\wt{G}^\vee$ is an algebraic group homomorphism. For every $\pi \in \Irr(\wt{G})^I$ one can define the L-function
$$L(s, \pi, R):=L(s, R \circ \phi_\pi),$$
where the right hand side denotes the local Artin L-function, see \cite{Tat1, Roh}.
Moreover, Langlands and Deligne defined the $\varepsilon$-factor $\varepsilon(s, R \circ \phi_\pi, \psi)$, which is uniquely characterized by certain properties (see loc. cit.), and thus one has the gamma-factor
$$\gamma(s, \pi, R, \psi):= \gamma(s, R \circ \phi_\pi, \psi) =\varepsilon(s, R \circ \phi_\pi, \psi) \cdot \frac{L(1-s, R^\vee \circ \phi_\pi)}{L(s, R \circ \phi_\pi)},$$
where $R^\vee$ denotes the contragredient representation of $R$.

By the construction of ${}^L \wt{G}$, one has a canonical isomorphism
$${}^L \wt{G}/Z(\wt{G}^\vee) \simeq (\wt{G}^\vee/Z(\wt{G}^\vee)) \times \WD_F.$$
Thus if $R={\rm Ad}$ is the adjoint representation of ${}^L\wt{G}$ on ${\rm Lie}(\wt{G}^\vee)$, then it factors through
$${\rm Ad}^\dag: \wt{G}^\vee/Z(\wt{G}^\vee) \longrightarrow \GL({\rm Lie}(\wt{G}^\vee)).$$
In particular, this means that for any distinguished genuine character $\theta$ one has
\begin{equation} \label{gam-red}
L(s, \pi, {\rm Ad}) = L(s, {\rm Ad}^\dag \circ \phi_{\pi, \theta}), \quad \gamma(s, \pi, {\rm Ad}, \psi) = \gamma(s, {\rm Ad}^\dag \circ \phi_{\pi, \theta})
\end{equation}
where $\phi_{\pi, \theta}:= \mca{L}_{G_{Q,n}}^{\rm DL} \circ \varphi_\theta^* \circ \msc{E}(\pi) \in {\rm Par}^{\rm DL}(\WD_F, \wt{G}^\vee)$ is the L-parameter of $\pi$ relative to $\theta$. Here $\phi_{\pi, \theta}$ is valued in $\wt{G}^\vee$, but we also naturally view it as valued in $\wt{G}^\vee/Z(\wt{G}^\vee)$ by abuse of notation.

%%%
\section{Formal degrees and the Hiraga--Ichino--Ikeda formula} \label{S:fd}
We retain the tame setting as in \S \ref{SS:HH} and let $(\sigma, V_\sigma)$ be an irreducible (and thus necessarily finite-dimensional) representation of $\HH_{\bepsilon}(\wt{G}, I)$.
Consider the irreducible genuine Iwahori-spherical representation 
$$\pi_\sigma:=\msc{E}^{-1}(\sigma) \in \Irr_\epsilon(\wt{G})$$ afforded by a certain vector space $V_{\pi_\sigma}$, where $\msc{E}$ is as in \eqref{msc-E}. Let $(\sigma^\vee, V_\sigma^\vee)$ be the contragredient representation of  $(\sigma, V_\sigma)$ specified
$$\angb{v}{\sigma^\vee(u)(\lambda)} = \angb{\sigma(\check{u})v}{\lambda}$$
where $\check{u}(g):=u(g^{-1})$ for all $u \in \HH_{\bepsilon}(\wt{G}, I)$. Note that $\check{u} \in \HH_{\epsilon}(\wt{G}, I)$ and thus $\sigma^\vee$ is an irreducible representation of $\HH_{\epsilon}(\wt{G}, I)$. Let $(\pi_\sigma^\vee, V_{\pi_\sigma}^\vee)$ be the contragredient representation of $(\pi_\sigma, V_{\pi_\sigma})$ given in 
\S \ref{SS:ds-fd}.

The isomorphism $\varphi_\theta$ gives rise to an irreducible representation $\varphi_\theta^*(\sigma)$ of $\HH(G_{Q,n}, I_{Q,n})$. For simplicity of notation, we write
$$\sigma_\theta:=\varphi_\theta^*(\sigma).$$
This gives $(\sigma_\theta, V_\sigma) \in \Irr(\HH(G_{Q,n}, I_{Q,n}))$, where the representation $\sigma_\theta$ is realized on the space $V_\sigma$. Let $(\pi_{\sigma_\theta}, V_{\pi_{\sigma_\theta}}) \in \Irr(G_{Q,n})$ be the irreducible $I_{Q,n}$-spherical representation associated with $\sigma_\theta$. It is clear that there is a vector space isomorphism
$$\eta: (\pi_\sigma)^I \longrightarrow (\pi_{\sigma_\theta})^{I_{Q,n}}$$
that is equivariant with respect to the $\HH_{\bepsilon}(\wt{G}, I)$ and $\HH(G_{Q,n}, I_{Q,n})$ actions on the domain and codomain respectively.

We choose $v\in (\pi_\sigma)^I$ and set $v':=\eta(v) \in (\pi_{\sigma_\theta})^{I_{Q,n}}$. The canonical pairing between $\pi_{\sigma_\theta}$ and $\pi_{\sigma_\theta}^\vee$ gives an isomorphism (see \cite[Proposition 4.2.5]{Bum})
$$((\pi_{\sigma_\theta})^{I_{Q,n}})^\vee  \simeq (\pi_{\sigma_\theta}^\vee)^{I_{Q,n}}.$$
Let $\lambda' \in (\pi_{\sigma_\theta}^\vee)^{I_{Q,n}}$ be such that  $\angb{v'}{\lambda'} \ne 0$. It gives an element $\angb{\eta(\cdot)}{\lambda'} \in ((\pi_\sigma)^I)^\vee$. Again, the canonical isomorphism
$$((\pi_\sigma)^I)^\vee \simeq (\pi_\sigma^\vee)^I$$
gives a $\lambda \in (\pi_\sigma^\vee)^I$ such that $\angb{\cdot}{\lambda} = \angb{\eta(\cdot)}{\lambda'}$. That is, we have the following commutative diagram
$$\begin{tikzcd}
(\pi_\sigma)^I \ar[r, "\eta"]  \ar[rd, "\lambda"] & (\pi_{\sigma_\theta})^{I_{Q,n}} \ar[d, "{\lambda'}"] \\
& \C,
\end{tikzcd}$$
which satisfies 
\begin{equation}  \label{E:eq-p}
\angb{\sigma(u)(\cdot)}{\lambda} = \angb{\sigma_\theta(\varphi_\theta(u)) \eta(\cdot)}{\lambda'}
\end{equation}
for all $u\in \HH_{\bepsilon}(\wt{G}, I)$. In particular, $\angb{v}{\lambda} = \angb{v'}{\lambda'}$.

\begin{lm} \label{L:2fdeg}
For $v \in \pi_\sigma^I, \lambda \in (\pi_\sigma^\vee)^I$ as above, one has
$$\deg(\pi_\sigma, \mu_{\wt{G}})^{-1} =\frac{1}{\angb{v}{\lambda}^2} \sum_{w\in \wt{W}_{\rm ex}}  q^{-l_G(w)}  \cdot \angb{\sigma(e_w^\theta)(v)}{\lambda} \cdot \angb{\sigma(e_{w^{-1}}^\theta)(v)}{\lambda},$$
where $\mu_{\wt{G}}$ is the measure of $\wt{G}$ such that $\mu_{\wt{G}}(\pmb{\delta}_n \cdot I)=1$.
\end{lm}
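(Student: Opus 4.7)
The plan is to evaluate the orthogonality integral \eqref{E:fd} at $v_1=v_2=v$ and $\lambda_1=\lambda_2=\lambda$, giving $\int_{\wt{G}} f_{v,\lambda}(g) f_{v,\lambda}(g^{-1})\,\mu_{\wt{G}}(g) = \angb{v}{\lambda}^2/\deg(\pi_\sigma,\mu_{\wt{G}})$, and to rewrite the integral as the desired sum over $\wt{W}_{\rm ex}$ by combining the Iwahori--Bruhat decomposition of $\wt G$ with an explicit formula for $\angb{\sigma(e_w^\theta)v}{\lambda}$.

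First I would decompose $\wt{G} = \bigsqcup_{w\in W_{\rm ex}}\pmb{\delta}_n I\wt{w} I$, choosing a lift $\wt{w}\in\wt G$ for each $w\in W_{\rm ex}$. Using that $v,\lambda$ are $I$-fixed and $f_{v,\lambda}$ is $\bepsilon$-genuine, for $g=\zeta i_1\wt{w} i_2$ one has $f_{v,\lambda}(g)=\bepsilon(\zeta)f_{v,\lambda}(\wt{w})$ and $f_{v,\lambda}(g^{-1})=\bepsilon(\zeta)^{-1} f_{v,\lambda}(\wt{w}^{-1})$, so their product is constant on $\pmb{\delta}_n I\wt{w} I$, equal to $f_{v,\lambda}(\wt{w}) f_{v,\lambda}(\wt{w}^{-1})$. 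Combined with $\mu_{\wt{G}}(\pmb{\delta}_n I\wt{w} I)=q^{l_G(w)}$ (from $\mu_{\wt{G}}(\pmb{\delta}_n I)=1$ and the standard $[I\wt{w} I:I]=q^{l_G(w)}$), this reduces the integral to $\sum_{w\in W_{\rm ex}} q^{l_G(w)} f_{v,\lambda}(\wt{w}) f_{v,\lambda}(\wt{w}^{-1})$.

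The main technical step is to truncate this sum to $\wt{W}_{\rm ex}$, i.e.\ to show $f_{v,\lambda}(\wt{w})=0$ for $w\in W_{\rm ex}\setminus\wt{W}_{\rm ex}$. For such $w$ the lift $\wt{w}$ fails to lie in $N_{Q,n}=N_{\wt{G}}(\mbf{T}(O_F))$, and the obstruction is captured by the relation $\wt{w}\cdot s_K(t)=\zeta(\wt{w},t)\cdot s_K(wtw^{-1})\cdot\wt{w}$ with $\zeta(\wt{w},\cdot)\colon\mbf{T}(O_F)\to\pmb{\delta}_n$ a character that is nontrivial precisely because $w\notin\wt{W}_{\rm ex}$; this nontriviality is read off the covering-torus commutator via the tame Hilbert symbol, using $p\nmid n$. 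Applying $f_{v,\lambda}$ to both sides (using $I$-biinvariance together with $\bepsilon$-genuineness) yields $f_{v,\lambda}(\wt{w})=f_{v,\lambda}(\wt{w}\cdot s_K(t))=\bepsilon(\zeta(\wt{w},t))\,f_{v,\lambda}(\wt{w})$, which forces $f_{v,\lambda}(\wt{w})=0$ as soon as one picks $t$ with $\zeta(\wt{w},t)\ne 1$. I expect this to be the main obstacle, as it is the step where the normalizer condition distinguishing $\wt{W}_{\rm ex}$ inside $W_{\rm ex}$ actually enters.

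Finally, a direct calculation from \eqref{e_w} and the Hecke action $\sigma(u)v=\int_{\wt{G}} u(g)\pi(g) v\,\mu_{\wt{G}}(g)$ yields, for $w\in\wt{W}_{\rm ex}$,
$$\angb{\sigma(e_w^\theta) v}{\lambda} = q^{l_G(w)}\,\theta^{-1}(\wt{w})\, f_{v,\lambda}(\wt{w}^{-1}),$$
the central $\pmb{\delta}_n$-integration being absorbed by the cancellation $\bepsilon(\zeta)\epsilon(\zeta)=1$. Because both sides of the lemma are visibly independent of the choice of lifts, I would pick them compatibly so that $\wt{w^{-1}}=\wt{w}^{-1}$; then $\theta^{-1}(\wt{w})\theta^{-1}(\wt{w}^{-1})=\theta^{-1}(1)=1$, and multiplying gives $q^{-l_G(w)}\angb{\sigma(e_w^\theta)v}{\lambda}\angb{\sigma(e_{w^{-1}}^\theta)v}{\lambda}=q^{l_G(w)} f_{v,\lambda}(\wt{w}) f_{v,\lambda}(\wt{w}^{-1})$. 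Summing over $\wt{W}_{\rm ex}$ matches the integral computed above, and dividing by $\angb{v}{\lambda}^2$ delivers the claimed formula.
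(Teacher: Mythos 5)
Your proposal is correct and follows essentially the same route as the paper's proof: reduce to the orthogonality relation \eqref{E:fd}, decompose the integral over Iwahori double cosets, and identify each cell contribution with $q^{-l_G(w)}\angb{\sigma(e_w^\theta)v}{\lambda}\angb{\sigma(e_{w^{-1}}^\theta)v}{\lambda}$ via the formula $\angb{\sigma(e_w^\theta)v}{\lambda}=q^{l_G(w)}\theta^{-1}(\wt{w})\angb{\pi_\sigma(\wt{w})v}{\lambda}$. The only difference is that you spell out the vanishing of the genuine $I$-biinvariant matrix coefficient on cells indexed by $W_{\rm ex}\setminus\wt{W}_{\rm ex}$ (via the commutator character on $\mbf{T}(O_F)$), a step the paper leaves implicit by citing Savin's support result for $\HH_{\bepsilon}(\wt{G},I)$; your treatment of it is correct.
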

\begin{proof}
Since $v$ and $\lambda$ are both $I$-fixed, the matrix coefficients $f_{v, \lambda}$ is $I$-biinvariant. Thus, in view of the definition of $\deg(\pi_\sigma, \mu_{\wt{G}})$ in \eqref{E:fd}, it suffices to show that
\begin{equation} \label{E:key}
\int_{\pmb{\delta}_n I \wt{w} I} \angb{ \pi_\sigma(g^{-1})(v) }{\lambda} \cdot  \angb{ \pi_\sigma(g)(v) }{\lambda} 
 = q^{-l_G(w)}  \cdot \angb{\sigma(e_w^\theta)(v)}{\lambda} \cdot \angb{\sigma(e_{w^{-1}}^\theta)(v)}{\lambda} .
\end{equation}
The left hand side of \eqref{E:key} is equal to
$$q^{l_G(w)} \cdot \angb{ \pi_\sigma(\wt{w}^{-1})(v) }{\lambda} \cdot  \angb{ \pi_\sigma(\wt{w})(v) }{\lambda}.$$
On the other hand,
$$\begin{aligned}
\angb{\sigma(e_w^\theta)}{\lambda} & = \int_{\wt{G}}  e_w^\theta(g) \cdot \angb{\pi_\sigma(g)(v)}{\lambda}\ \mu_{\wt{G}}(g) \\
& = q^{l_G(w)} \cdot \theta^{-1}(\wt{w}) \cdot \angb{\pi_\sigma(\wt{w})}{\lambda}.
\end{aligned}$$
Similarly, we have
$$\angb{\sigma(e_{w^{-1}}^\theta)}{\lambda} = q^{l_G(w)} \cdot \theta(\wt{w}) \cdot \angb{\pi_\sigma(\wt{w}^{-1})}{\lambda}.$$
Combining the above, one obtains \eqref{E:key} and completes the proof.
\end{proof}

Recall that $\wt{W}_{\rm ex}=Y_{Q,n} \times W$ is the extended affine Weyl group of $G_{Q,n}$. Thus, one has the length function
$$l_{G_{Q,n}}: \wt{W}_{\rm ex} \longrightarrow \N$$
which is explicitly given by
\begin{equation} \label{l_GQn}
l_{G_{Q,n}}(w)=\sum_{\alpha \in \Phi^+(s)} \val{\angb{n_\alpha^{-1}\alpha}{y}} + \sum_{\alpha \in \Phi^-(s)} \val{\angb{n_\alpha^{-1}\alpha}{y}+ 1}.
\end{equation}
Here $w=(y, s) \in \wt{W}_{\rm ex}$ with $y\in Y_{Q,n}, s\in W$ and $\Phi^\pm(s)$ are the same as in \eqref{l_G}. It follows from Lemma \ref{L:2fdeg} that for $\pi_{\sigma_\theta} \in \Irr(G_{Q,n})$ one has analogously the equality
$$\deg(\pi_{\sigma_\theta}, \mu_{G_{Q,n}})^{-1} =\frac{1}{\angb{v'}{\lambda'}^2} \sum_{w\in \wt{W}_{\rm ex}}  q^{-l_{G_{Q,n}}(w)}  \cdot \angb{\sigma_\theta(E_w)(v')}{\lambda'} \cdot \angb{\sigma_\theta(E_{w^{-1}})(v')}{\lambda'},$$
where $\mu_{G_{Q,n}}$ is the measure such that $\mu_{G_{Q,n}}(I_{Q,n})=1$. In fact, the right hand side of the above equality is often taken as the definition of formal degree of discrete series representation of the Iwahori--Hecke algebra of a linear group, see \cite[Page 37]{Mat77-B} or the discussion in \cite[\S 4.1]{FeOp20}.

The following proposition plays a crucial role in the comparison between $\deg(\pi_\sigma, \mu_{\wt{G}})$ and $\deg(\pi_{\sigma_\theta}, \mu_{G_{Q,n}})$.

\begin{prop} \label{P:key-HII}
For every $w\in \wt{W}_{\rm ex}$, one has
\begin{equation} \label{phi-E}
q^{-l_G(w)/2} \cdot \varphi_\theta(e_w^\theta) = q^{-l_{G_{Q,n}}(w)/2} \cdot E_w
\end{equation}
and thus
\begin{equation} \label{e=E}
q^{-l_G(w)}  \cdot \angb{\sigma(e_w^\theta)(v)}{\lambda} \cdot \angb{\sigma(e_{w^{-1}}^\theta)(v)}{\lambda} = q^{-l_{G_{Q,n}}(w)}  \cdot \angb{\sigma_\theta(E_w)(v')}{\lambda'} \cdot \angb{\sigma_\theta(E_{w^{-1}})(v')}{\lambda'}.
\end{equation}
\end{prop}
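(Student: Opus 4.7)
Once \eqref{phi-E} is established, the second identity \eqref{e=E} follows immediately: applying $\sigma$ to $e_w^\theta$ and pairing with $\lambda$, the equivariance \eqref{E:eq-p} gives $\angb{\sigma(e_w^\theta)(v)}{\lambda} = q^{(l_G(w) - l_{G_{Q,n}}(w))/2} \angb{\sigma_\theta(E_w)(v')}{\lambda'}$, and the same holds for $w^{-1}$ upon noting $l_G(w^{-1}) = l_G(w)$ and $l_{G_{Q,n}}(w^{-1}) = l_{G_{Q,n}}(w)$; multiplying the two yields \eqref{e=E}. So the real work is \eqref{phi-E}.

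To prove \eqref{phi-E}, I plan to reduce to the generators of the Bernstein--Lusztig presentation of Theorem \ref{T:IH}. For a simple reflection $w = w_\alpha$, one has $l_G(w_\alpha) = l_{G_{Q,n}}(w_\alpha) = 1$ and $\varphi_\theta(e_{w_\alpha}^\theta) = E_{w_\alpha}$ by the definition of $\varphi_\theta$ in \eqref{D:iso-t}, so the identity holds trivially. For a dominant $y \in Y_{Q,n}$, the definitions of $t_y^\theta$ and $T_y$ specialize (taking $y_1=y$, $y_2=0$) to $e_y^\theta = q^{\angb{y,\rho_G}} t_y^\theta$ and $E_y = q^{\angb{y,\rho_{G_{Q,n}}}} T_y$, so applying $\varphi_\theta(t_y^\theta) = T_y$ yields $\varphi_\theta(e_y^\theta) = q^{\angb{y,\rho_G - \rho_{G_{Q,n}}}} E_y$; the specialization of \eqref{l_G} and \eqref{l_GQn} at $(y,1)$ with $y$ dominant gives $l_G(y) - l_{G_{Q,n}}(y) = 2\angb{y, \rho_G - \rho_{G_{Q,n}}}$, matching the expected scalar. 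The extension to $w = (y,s)$ with $y$ dominant and $s \in W$ is then straightforward via length additivity $l_G(w) = l_G(y) + l_W(s)$ (and likewise for $l_{G_{Q,n}}$): one has $e_w^\theta = e_y^\theta \cdot e_s^\theta$ and $E_w = E_y \cdot E_s$, while $\varphi_\theta(e_s^\theta) = E_s$ follows by iterating the simple reflection case along a reduced expression for $s$.

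For a general $w = (y,s) \in \wt{W}_{\rm ex}$ with $y$ not necessarily dominant, I would proceed by induction on $l_G(w)$ using the standard Iwahori--Hecke algebra relation: $e_{w_\alpha}^\theta \cdot e_w^\theta$ equals $e_{w_\alpha w}^\theta$ or $(q-1) e_w^\theta + q \, e_{w_\alpha w}^\theta$ depending on whether $l_G(w_\alpha w) > l_G(w)$, and analogously for $E_{w_\alpha} \cdot E_w$ with $l_{G_{Q,n}}$. The most delicate point, and the main obstacle I anticipate, is the compatibility of the ascent/descent condition at a finite simple reflection $w_\alpha$ between the two length functions: I expect both signs to be governed by the same inequality on $\angb{\alpha,y}$, differing only by the positive rescaling factor $n_\alpha^{-1}$ that appears in \eqref{l_GQn} compared to \eqref{l_G}, so that the two Iwahori--Hecke relations have matching form. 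Granting this compatibility, the scalar $q^{(l_G(w) - l_{G_{Q,n}}(w))/2}$ propagates correctly through each inductive step, extending \eqref{phi-E} from the dominant case to all of $\wt{W}_{\rm ex}$.
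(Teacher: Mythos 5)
Your proposal is correct and follows essentially the same route as the paper: the same reduction to the three cases $w\in W$, $w=y$ dominant, and general $w=(y,s)$, resting on the same key observation that the ascent/descent pattern of $l_G$ and $l_{G_{Q,n}}$ under left multiplication by simple reflections agrees because $\langle \alpha, y\rangle$ and $\langle n_\alpha^{-1}\alpha, y\rangle$ have the same sign. The only (immaterial) difference is in the general case: you propagate one simple reflection at a time through the quadratic relation, while the paper produces a single $s'\in W$ with $l_G(s'w)=l_G(s')+l_G(w)$ bringing $w$ to the dominant situation and then cancels the invertible element $E_{s'}$.
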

\begin{proof}
 We check \eqref{phi-E} in three steps by considering: 
\begin{enumerate}
\item[(i)] when $w \in W$, 
\item[(ii)] when $w=y \in Y_{Q,n}^+$ is dominant, 
\item[(iii)] when $w=(y, s) \in \wt{W}_{\rm ex}$ is a general element.
\end{enumerate}

First, the equality \eqref{phi-E} in case (i) follows from \eqref{E:eq-p} and the fact that $l_G(w) = l_{G_{Q,n}}(w)$ and $\varphi_\theta(e_w^\theta) = E_w$ for $w\in W$. For case (ii) we note that for $y\in Y_{Q,n}^+$ dominant, one has  $\varphi_\theta(t_y^\theta) = T_y$
and thus
$$q^{-\angb{y}{\rho_G}} \cdot \varphi_\theta(e_y^\theta) = q^{-\angb{y}{\rho_{G_{Q,n}}}} \cdot E_y.$$
Since $l_G(y) = 2\angb{y}{\rho_G}$ and similarly $l_{G_{Q,n}}(y) = 2\angb{y}{\rho_{G_{Q,n}}}$, the equality \eqref{phi-E} is clear in this case.

Now we consider case (iii) and thus a general $w = (y, s) \in \wt{W}_{\rm ex} = Y_{Q,n} \rtimes W$ with $y\in Y_{Q,n}$ and $s\in W$. We write for brevity $w=ys$ and recall an argument from the proof of \cite[Proposition 6.3]{Sav04}. If $y$ is not positive, then there exists a simple root $\alpha$ such that $\angb{\alpha}{y} < 0$. Using the formula of $l_G$ in \eqref{l_G}, we have
$$l_G(w_\alpha \cdot w) = l_G(w) + 1.$$
Inductively, one can find $s' \in W$ such that $s' w = z s''$ with $z \in Y_{Q,n}^+$ and $s''\in W$ satisfying
\begin{equation} \label{l-eq}
l_G(s' w) = l_G(z) + l_G(s'') = l_G(s') + l_G(w).
\end{equation}
Note that $\angb{\alpha}{y}<0$ if and only if $\angb{n_\alpha^{-1} \alpha}{y}<0$, and thus one also has $l_{G_{Q,n}}(w_\alpha \cdot w) = l_{G_{Q,n}}(w) + 1$ by checking \eqref{l_GQn} instead. Hence, for the above $s'\in W$ it also gives
\begin{equation} \label{lc-eq}
l_{G_{Q,n}}(s' w) = l_{G_{Q,n}}(z) + l_{G_{Q,n}}(s'') = l_{G_{Q,n}}(s') + l_{G_{Q,n}}(w).
\end{equation}

It follows from \eqref{fac-e} and \eqref{l-eq} that
$$q^{-l_G(s')/2} q^{-l_G(w)/2} \cdot \varphi_\theta(e_{s'}^\theta) \cdot \varphi_\theta(e_w^\theta) = q^{-l_G(z)/2} q^{-l_G(s'')/2} \cdot \varphi_\theta(e_{z}^\theta) \cdot \varphi_\theta(e_{s''}^\theta).$$
Similarly, by using \eqref{fac-e} and \eqref{lc-eq} one has
$$q^{-l_{G_{Q,n}}(s')/2} q^{-l_{G_{Q,n}}(w)/2} \cdot E_{s'} \cdot E_w = q^{-l_{G_{Q,n}}(z)/2} q^{-l_{G_{Q,n}}(s'')/2} \cdot E_{z} \cdot E_{s''}.$$
Applying (i) above to $s', s''$ and (ii) to $z$, and noting the fact that $E_{s'}$ is invertible, we obtain \eqref{phi-E} for $w$.

Lastly, equality \eqref{phi-E} coupled with \eqref{E:eq-p} give
\begin{equation} 
q^{-l_G(w)/2} \cdot \angb{\sigma(e_w^\theta)(v)}{\lambda} = q^{-l_{G_{Q,n}}(w)/2}  \cdot \angb{\sigma_\theta(E_w)(v')}{\lambda'} 
\end{equation}
for every $w\in \wt{W}_{\rm ex}$. A similar equality holds for $w^{-1}$ in place of $w$. We have  $l_G(w) = l_G(w^{-1})$ for every $w\in \wt{W}_{\rm ex}$, which then gives \eqref{e=E}. This completes the proof.
\end{proof}

Let $\psi: F \to \C^\times$ be a non-trivial character. 
Let $\mu_{G, \psi}^\natural$ be the canonical measure on $G$ as given in \cite[\S 4]{Gro97}, using a certain differential form of top degree on $G$ over $F$ and self-dual Haar measure on $F$ with respect to $\psi$. Let $\mu_{\wt{G}, \psi}^\natural$ be the unique Haar measure on $\wt{G}$ such that
\begin{equation} \label{E:meas}
\mfr{q}_*(\mu_{\wt{G}, \psi}^\natural)=\mu_{G, \psi}^\natural,
\end{equation}
where the left hand side denotes the push-out measure of $\mu_{\wt{G}, \psi}^\natural$ via the quotient map $\mfr{q}: \wt{G} \onto G$. 
In particular, we have 
$$\mu_{\wt{G}, \psi}^\natural(\pmb{\delta}_n \times I) = \mu_{G, \psi}^\natural(I).$$ 
If $\psi$ is of conductor $O_F$, then
$$\mu_{\wt{G}, \psi}^\natural(\pmb{\delta}_n \times I) = \mu_{G, \psi}^\natural(I) = q^{-(l + \dim \mbf{G})/2} \cdot \val{\mbf{T}(\kappa_F)} = q^{-\val{\Phi^+}} \cdot (1-q^{-1})^l,$$
where the middle equality follows from \cite[(4.11)]{Gro97} or \cite[Page 633]{Kot88}.

\begin{thm} \label{T:fd-HII}
Let $\pi_\sigma = \msc{E}^{-1}(\sigma) \in \Irr_\epsilon(\wt{G})$ be associated with $\sigma \in \Irr(\HH_{\bepsilon}(\wt{G}, I))$. Consider the Deligne--Langlands correspondence $\mca{L}_{G_{Q,n}}^{\rm DL}$ for $G_{Q,n}$ as given in \cite[Theorem 3.1]{FOS22}. Let $\mca{L}_{\wt{G}}^{\rm DL}$ be the Deligne--Langlands correspondence associated with $\mca{L}_{G_{Q,n}}^{\rm DL}$ as in Definition \ref{DLcor}.
Let $(\phi_\sigma, \tau_\sigma) = \mca{L}_{\wt{G}}^{\rm DL}(\pi_\sigma) \in {\rm Par}^{\rm DL}(\WD_F, {}^L\wt{G})$ be the Deligne--Langlands parameter of $\pi_\sigma$. Assume that $\psi$ is of conductor $O_F$ and $\pi_\sigma$ is a discrete series representation.
Then 
\begin{equation} \label{E:main}
\deg(\pi_\sigma, \mu_{\wt{G}, \psi}^\natural) = \frac{\dim \tau_\sigma}{ \val{ \mca{S}_{\phi_\sigma} } } \cdot \val{\gamma(0, \phi_\sigma, {\rm Ad}, \psi)},
\end{equation}
where $\gamma(0, \phi_\sigma, Ad, \psi)$ is given as in \S \ref{SS:gamma}.
\end{thm}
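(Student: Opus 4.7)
The plan is to reduce the desired identity to the linear algebraic Hiraga--Ichino--Ikeda formula for $G_{Q,n}$, which is already established as \cite[Theorem 2]{FOS22}, and then match both sides under the Deligne--Langlands correspondence $\mca{L}_{\wt{G}}^{\rm DL}$ constructed in \S \ref{S:par}. Fix any unramified distinguished genuine character $\theta$ and set $\sigma_\theta := \varphi_\theta^*(\sigma) \in \Irr(\HH(G_{Q,n}, I_{Q,n}))$, with associated Iwahori-spherical $\pi_{\sigma_\theta} \in \Irr(G_{Q,n})^{I_{Q,n}}$. Let $\mu_{\wt{G}}$ and $\mu_{G_{Q,n}}$ be the Haar measures with $\mu_{\wt{G}}(\pmb{\delta}_n \cdot I) = 1$ and $\mu_{G_{Q,n}}(I_{Q,n}) = 1$ respectively. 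Applying Lemma \ref{L:2fdeg} to $\pi_\sigma$ and its linear analogue to $\pi_{\sigma_\theta}$, then summing the term-by-term identity \eqref{e=E} of Proposition \ref{P:key-HII} over $w \in \wt{W}_{\rm ex}$ and using $\angb{v}{\lambda} = \angb{v'}{\lambda'}$, I obtain
\[
\deg(\pi_\sigma, \mu_{\wt{G}}) = \deg(\pi_{\sigma_\theta}, \mu_{G_{Q,n}}).
\]

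Next I would pass to the canonical measures. Because $\mbf{G}$ and $G_{Q,n}$ are split semisimple of the same rank $l$ and satisfy $\val{\Phi_{Q,n}^+} = \val{\Phi^+}$ (as $\Phi_{Q,n}$ is merely a rescaling of $\Phi$), the Gross-type formula recalled just before the theorem gives
\[
\mu_{\wt{G}, \psi}^\natural(\pmb{\delta}_n \times I) = q^{-\val{\Phi^+}}(1-q^{-1})^l = \mu_{G_{Q,n}, \psi}^\natural(I_{Q,n}).
\]
Combined with the scaling law $\deg(\pi, c \cdot \mu) = c^{-1} \deg(\pi, \mu)$, this upgrades the preceding equality to $\deg(\pi_\sigma, \mu_{\wt{G}, \psi}^\natural) = \deg(\pi_{\sigma_\theta}, \mu_{G_{Q,n}, \psi}^\natural)$. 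Since discrete-seriesness is detected by the Hecke module and $\varphi_\theta$ is an algebra isomorphism, $\pi_{\sigma_\theta}$ is discrete series as well, and \cite[Theorem 2]{FOS22} identifies the right-hand side with $(\dim \tau_{\sigma_\theta}) \cdot \val{\gamma(0, \phi_{\sigma_\theta}, {\rm Ad}, \psi)}/\val{\mca{S}_{\phi_{\sigma_\theta}}}$, where $\phi_{\sigma_\theta}$ is the $G_{Q,n}^\vee$-valued parameter produced by $\mca{L}_{G_{Q,n}}^{\rm DL}$.

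It remains to match this expression with the target right-hand side. By the construction of $\mca{L}_{\wt{G}}^{\rm DL}$ in Definition \ref{DLcor} via $i_\theta^*$, the canonical identification $\mca{S}_{\phi_{\sigma_\theta}} \simeq \mca{S}_{\phi_\sigma}$ intertwines $\tau_{\sigma_\theta}$ with $\tau_\sigma$, so the ratio $\dim \tau/\val{\mca{S}_\phi}$ is the same on both sides. Equality of the adjoint gamma factors is precisely \eqref{gam-red}, since the adjoint representation of ${}^L\wt{G}$ factors through $\wt{G}^\vee/Z(\wt{G}^\vee) \simeq G_{Q,n}^\vee/Z(G_{Q,n}^\vee)$. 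In this approach essentially all of the genuine content is already packaged into Proposition \ref{P:key-HII}, whose proof is the one place where the discrepancy between the length functions $l_G$ and $l_{G_{Q,n}}$ must be balanced exactly against the powers of $q$ introduced by $\varphi_\theta$ in the Bernstein presentation. The main delicate point is to verify that this length-versus-scaling balance is exact, with no residual power of $q$; once that is granted, the remainder of the proof is bookkeeping through the canonical measure normalizations and the identifications of \S \ref{S:par}.
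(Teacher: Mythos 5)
Your proposal is correct and follows essentially the same route as the paper's proof: reduce via Lemma \ref{L:2fdeg} and Proposition \ref{P:key-HII} to the equality $\deg(\pi_\sigma,\mu_{\wt{G}})=\deg(\pi_{\sigma_\theta},\mu_{G_{Q,n}})$, rescale to the canonical measures using the common constant $q^{-\val{\Phi^+}}(1-q^{-1})^l$, invoke \cite[Theorem 2]{FOS22} for $G_{Q,n}$, and match the enhanced parameters and adjoint gamma factors through $i_\theta^*$ and \eqref{gam-red}. Your explicit remarks that $\val{\Phi_{Q,n}^+}=\val{\Phi^+}$ and that $\pi_{\sigma_\theta}$ is again a discrete series are points the paper leaves implicit, but they are correct and do not change the argument.
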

\begin{proof}
Recall the measure $\mu_{\wt{G}}$ satisfies $\mu_{\wt{G}}(\pmb{\delta}_n \times I) = 1$, and Proposition \ref{P:key-HII} gives
$$
\deg(\pi_\sigma, \mu_{\wt{G}}) = \deg(\pi_{\sigma_\theta}, \mu_{G_{Q,n}})$$
where $\mu_{G_{Q,n}}$ is such that $\mu_{G_{Q,n}}(I_{Q,n})=1$.

Let $\mu_{G_{Q,n}, \psi}^\natural$ be the canonical measure defined similarly as $\mu_{G, \psi}^\natural$. Let $\wt{c}, c\in \C^\times$ be constants defined by
$$\mu_{\wt{G}, \psi}^\natural = \wt{c} \cdot \mu_{\wt{G}}, \quad \mu_{G_{Q,n}, \psi}^\natural = c \cdot \mu_{G_{Q,n}}.$$
We get $\wt{c} = c= q^{-\val{\Phi^+}} \cdot (1-q^{-1})^l$.
This gives that
$$ \deg(\pi_\sigma, \mu_{\wt{G}, \psi}^\natural) = \wt{c}^{-1} \cdot \deg(\pi_\sigma, \mu_{\wt{G}})   = c^{-1} \cdot \deg(\pi_{\sigma_\theta}, \mu_{G_{Q,n}})  =  \deg(\pi_{\sigma_\theta}, \mu_{G_{Q,n}, \psi}^\natural).
$$

Let $\mca{L}_{G_{Q,n}}^{\rm DL}$ be as in \cite[Theorem 3.1]{FOS22}. Let $\pi'$ be a general Iwahori-spherical discrete series of $G_{Q,n}$ with Deligne--Langlands parameter $(\phi_{\pi'}, \tau_{\pi'}):=\mca{L}_{G_{Q,n}}^{\rm DL}(\pi')$. The Hiraga--Ichino--Ikeda formula (as given in \cite{HII, HIIc}) for such $\pi'$ and $\psi$ is proved in \cite[Theorem 2]{FOS22} and one has
\begin{equation} \label{E:fd-lin}
\deg(\pi', \mu_{G_{Q,n}, \psi}^\natural) = \frac{\dim \tau_{\pi'}}{\val{\mca{S}_{\phi_{\pi'}}}} \cdot \val{ \gamma(0, \phi_{\pi'}, {\rm Ad}, \psi) },
\end{equation}
where ${\rm Ad}: \wt{G}^\vee = G_{Q,n}^\vee \to \GL({\rm Lie}(\wt{G}^\vee))$ is the adjoint representation of $\wt{G}^\vee$. Now, if $\pi' = \pi_{\sigma_\theta}$, then we have $\mca{S}_{\phi_\sigma} = \mca{S}_{\phi_{\pi'}}$ and $\tau_{\sigma} = \tau_{\pi'}$ by the definition of $\mca{L}_{\wt{G}}^{\rm DL}$. Moreover, one has $ \gamma(0, \phi_{\sigma}, {\rm Ad}, \psi) = \gamma(0, \phi_{\pi'}, {\rm Ad}, \psi)$ in view of \eqref{gam-red} and the equality $(\phi_\sigma, \tau_\sigma) = i_\theta^*(\phi_{\pi'}, \tau_{\pi'})$. Combining all the above, one obtains \eqref{E:main}.
\end{proof}

The above theorem is clearly compatible with the work of Ichino--Lapid--Mao \cite{ILM17}, Gan--Ichino \cite{GaIc14} and Gan--Savin \cite{GS12, GS12-2} on the formal degrees of discrete series of the metaplectic double cover $\Mp_{2r}$ of $\Sp_{2r}$. In particular, in  \cite{ILM17} the authors give a complete proof of the formal degree formula for the generic discrete series of $\Mp_{2r}$. It is natural to expect that the Hiraga--Ichino--Ikeda conjecture on formal degrees hold for arbitrary genuine discrete series of a cover $\wt{G}$.

%%%
\section{Whittaker dimensions of certain discrete series} \label{S:Wdim}
In this section, we consider the ``oasitic covers" of an almost simple simply-connected $G$ and concentrate on just the one-dimensional character $\sigma$ such that $\pi_\sigma$ is a discrete series, as studied by Borel in \cite{Bor76}. For such $\pi_\sigma \in \Irr_\epsilon(\wt{G})$, we compute  the $\psi_*$-Whittaker dimension
$$\dim \Wh_{\psi_*}(\pi_\sigma)$$
where $\psi_*: F \to \C^\times$ is of conductor $\mfr{p}_F$. The reason for this constraint on the conductor is that we will apply results from \cite{GGK} for the Gelfand--Graev module associated with $\psi_*$.

For the rest of the paper, we assume that $G$ is almost simple and simply-connected. Let $\wt{G}^{(n)}$ be the $n$-fold cover associated with $Q$ such that
$$Q(\alpha^\vee) = 1$$
for every short coroot $\alpha^\vee$ of $G$. We tabulate all oasitic covers $\wt{G}^{(n)}$ of such $G$ in the following table (see \cite[\S 6.1]{GGK}).

% \begin{table}[!htbp]  \label{T:3T2}
\begin{table}[H]  \label{T:oas}
\caption{Oasitic covers}
\vskip 10pt
\renewcommand{\arraystretch}{1.3}
\begin{tabular}{|c|c|c|c|c|c|c|c|c|c|c|}
\hline
 & $\SL_{r+1}$  &  $\Spin_{2r+1}$ & $\Sp_{2r}$  & $\Spin_{2r}$    \\
\hline
oasitic & $\gcd(n, r+1)=1$ & $n$ odd & $n$ odd & $n$ odd   \\ 
\hline
\end{tabular}
\vskip 10pt

\begin{tabular}{|c|c|c|c|c|c|c|c|c|c|c|}
\hline
 & $E_6$ & $E_7$& $E_8$ & $F_4$ & $G_2$   \\ 
\hline
oasitic & $2, 3\nmid n$ & $2, 3 \nmid n$& $2, 3, 5 \nmid n$ & $2, 3 \nmid n$ & $2, 3\nmid n$   \\  
\hline
\end{tabular}
\end{table}
\vskip 10pt

Several properties of oasitic covers are as follows:
\begin{enumerate}
\item[(i)] For every oasitic cover $\wt{G}^{(n)}$ as above, one has $Y_{Q,n} = nY$ and in particular, $\wt{G}^\vee \simeq G^\vee$ and $G_{Q,n} \simeq G$.
\item[(ii)] The additive character $\psi_*$ gives a character of the unipotent subgroup $U^-$ opposite to the unipotent radical $U$ of the Borel subgroup. Write $\mca{V}_\epsilon:={\rm ind}_{\pmb{\delta}_n \times U^-}^{\wt{G}} (\epsilon \times \psi_*)$ for the $\epsilon$-genuine Gelfand--Graev representation of $\wt{G}$ with left action given by $(g\cdot f)(x):=f(xg), f \in \mca{V}_\epsilon$. The structure of its Iwahori-component $\mca{V}_\epsilon^I$, as a module over the Iwahori-Hecke algebra $\HH_{\bepsilon}(\wt{G}, I)$, is given by (see \cite[Theorem 1.2]{GGK})
$$\mca{V}_\epsilon^I  \simeq \HH_{\bepsilon}(\wt{G}, I)  \otimes_{\HH_W}  V_\msc{X} .$$
Here $\msc{X}_{Q,n}:=Y/Y_{Q,n} = Y/nY$ is of size $n^r$ and affords a permutation representation 
$$\eta_\msc{X}: W \longrightarrow {\rm Perm}(\msc{X}_{Q,n})$$
given by $\eta_\msc{X}(w)(y):= w(y)$. Also, $V_\msc{X}$ is a deformation of $\eta_\msc{X} \otimes \varepsilon_W$ such that
\begin{equation} \label{E:dform}
(V_\msc{X})_{q\to 1} \simeq \eta_\msc{X} \otimes \varepsilon_W
\end{equation}
as $W$-representations, where $\varepsilon_W$ is the sign character of $W$ and the deformation $(-)_{q\to 1}$ is given as in \cite[Proposition 10.11.4]{Car}.
\end{enumerate}

For every $\epsilon$-genuine Iwahori-spherical representation $\pi$, we have 
$$\Wh_{\psi_*}(\pi):=\Hom_{\HH_{\epsilon}(\wt{G}, I)}(\mca{V}_{\bepsilon}^I, (\pi^\vee)^I).$$
In particular,
\begin{equation} \label{cal-Wd}
\Wh_{\psi_*}(\pi_\sigma) = \Hom_{\HH_W}(V_\msc{X}, \sigma^\vee|_{\HH_W}) = \Hom_{W}(\eta_\msc{X} \otimes \varepsilon_W, (\sigma^\vee|_{\HH_W})_{q\to 1}).
\end{equation}

For a one-dimensional character $\sigma: \HH_\epsilon(\wt{G}, I) \to \C$, Borel \cite[\S 5.8]{Bor76} determined explicitly the values of $\sigma$ on the Coxeter generators of $\HH_{\bepsilon}(\wt{G}, I) \simeq \HH(G_{Q,n}, I_{Q,n})$ such that $\pi_\sigma$ is a discrete series. For convenience, we set
$$
\mca{C}(\wt{G}):=\set{\sigma \in \Hom(\HH_{\bepsilon}(\wt{G}, I), \C): \ \pi_\sigma \text{ is a discrete series}}.$$

 Note that since we are considering oasitic cover of an almost simple $G$, the affine Weyl group of $G_{Q,n}$ is 
$$\wt{W}_{\rm ex} =(nY^{sc}) \rtimes W.$$
Let  $\wt{\Delta}^\vee$ be the set of extended simple coroots of $\wt{W}_{\rm ex}$. 
We set
$$\xi_{s}(\sigma) = 
\begin{cases}
1 &  \text{ if } \sigma(e_s^\theta) = q \text{ for } s\in \wt{\Delta}^\vee,\\
-1 &  \text{ if } \sigma(e_s^\theta) = -1 \text{ for } s\in \wt{\Delta}^\vee.
\end{cases}$$
It is clear that $\set{\xi_s(\sigma)}_{s\in \wt{\Delta}^\vee}$ uniquely determines the  character $\sigma$.

Let $\varepsilon_{\HH_I}$ be the sign character of $\HH_{\bepsilon}(\wt{G}, I)$ with $\xi_s(\varepsilon_{\HH_I}) =-1$ for all $s \in \wt{\Delta}^\vee$. Then $\varepsilon_{\HH_I} \in \mca{C}(\wt{G})$ and
$\pi_{\varepsilon_{\HH_I}}$ is the covering analogue of the Steinberg representation. In this case, it follows from \cite{GGK2} that
\begin{equation} \label{St-dim}
\dim \Wh_{\psi_*}(\pi_{\varepsilon_{\HH_I}}) =\val{W}^{-1} \cdot \prod_{j=1}^r (n + m_j),
\end{equation}
where $m_j, 1\lest j \lest r$ are the exponents of the Weyl group. 

When $G$ is of type $A, D$ or $E$, one has
$$\mca{C}(\wt{G}) = \set{\varepsilon_{\HH_I}}.$$
Thus it remains to consider $\sigma \in \mca{C}(\wt{G}) - \set{\varepsilon_{\HH_I}}$ when $G$ is of type $B, C, F_4$ and $G_2$.

\subsection{Type $B_r$}
For $\wt{\Spin}_{2r+1}^{(n)}, r\gest 2$, the extended Dynkin diagram for $\wt{W}_{\rm ex}$ is as follows:

$$ \qquad
\begin{picture}(4.5,0.3)(0,0)
\put(1,0){\circle*{0.08}}
\put(1.5,0){\circle{0.08}}
\put(2,0){\circle{0.08}}
\put(2.5,0){\circle{0.08}}
\put(3,0){\circle{0.08}}
%
%\put(1.04,0){\line(1,0){0.42}}
\put(1.04,0.015){\line(1,0){0.42}}
\put(1.04,-0.015){\line(1,0){0.42}}

\multiput(1.55,0)(0.05,0){9}{\circle*{0.02}}
\put(2.04,0){\line(1,0){0.42}}
\put(2.54,0.015){\line(1,0){0.42}}
\put(2.54,-0.015){\line(1,0){0.42}}
% \put(2.72,-0.04){$<$}
%
\put(1,0.15){\footnotesize $\xi_3(\sigma)$}
\put(1.5,0.15){\footnotesize $\xi_1(\sigma)$}
\put(2,0.15){\footnotesize $\xi_1(\sigma)$}
\put(2.5,0.15){\footnotesize $\xi_1(\sigma)$}
\put(3,0.15){\footnotesize $\xi_2(\sigma)$} 
\end{picture}
$$
\vskip 10pt
\noindent Here the solid dot represents the unique simple coroot of $\wt{\Delta}^\vee$ not arising from $W$. It then follows from \cite[Page 254]{Bor76} that elements $\sigma \in \mca{C}(\wt{G}) - \set{\varepsilon_{\HH_I}}$ are of the form
$$ (\xi_1(\sigma), \xi_2(\sigma), \xi_3(\sigma)) =
\begin{cases}
(-1, -1, 1), (-1, 1, -1) \text{ or } (-1, 1, 1) & \text{ if $r\gest 4$}, \\
(-1, -1, 1), (-1, 1, -1) & \text{ if $r=2, 3$}.
\end{cases}
$$

If $\sigma$ has $(\xi_1(\sigma), \xi_2(\sigma)) = (-1, -1)$, then it follows from \eqref{cal-Wd} that
$$\dim \Wh_{\psi_*}(\pi_\sigma) = \dim \Wh_{\psi_*}(\pi_{\varepsilon_{\HH_I}})$$
 and thus is given explicitly by the same formula in \eqref{St-dim}. If $(\xi_1(\sigma), \xi_2(\sigma)) =(-1, 1)$, then in terms of the bipartition parametrization of $\Irr(W)$ (see \cite[\S 11.4]{Car}), we have 
$$(\sigma^\vee|_{\HH_W})_{q\to 1} = ( (1^r); \emptyset ).$$
Note that the character value of $\eta_\msc{X}$ was essentially computed in \cite{Som97, GNS99}. Moreover, it follows from \cite[Proposition 3.3]{GNS99} and \eqref{cal-Wd} that
$$\dim \Wh_{\psi_*}(\pi_\sigma) = \frac{1}{\val{W}} \cdot (n-1) \prod_{j=1}^{r-1} (n  + 2j - 1)$$
for these $\sigma \in \mca{C}(\wt{G})$ with $(\xi_1(\sigma), \xi_2(\sigma)) =(-1, 1)$.

\subsection{Type $C_r$} Consider $\wt{\Sp}_{2r}, r\gest 3$, the extended Dynkin diagram for $\wt{W}_{\rm ex}$ is as follows:

$$
\begin{picture}(3.5,0.5)(0,0)
\put(0.5, 0.25){\circle{0.08}}
\put(0.5, -0.25){\circle*{0.08}}
\put(1,0){\circle{0.08}}
\put(1.5,0){\circle{0.08}}
\put(2,0){\circle{0.08}}
\put(2.5,0){\circle{0.08}}
\put(3,0){\circle{0.08}}
%\put(3.5, 0.25){\circle{0.08}}
%\put(3.5, -0.25){\circle{0.08}}
%
\put(1,0){\line(-2,1){0.46}}
\put(1,0){\line(-2,-1){0.46}}
\put(1.04,0){\line(1,0){0.42}}
%\put(1.54,0){\line(1,0){0.42}}
%\multiput(2.05,0)(0.05,0){9}{\circle{0.02}}
\multiput(1.55,0)(0.05,0){9}{\circle{0.02}}
\put(2.04,0){\line(1,0){0.42}}

%\put(2.54,0){\line(1,0){0.42}}
%\put(3.00,0){\line(2,1){0.46}}
%\put(3.00,0){\line(2,-1){0.46}}
\put(2.54,0.015){\line(1,0){0.42}}
\put(2.54,-0.015){\line(1,0){0.42}}
\put(0.5,0.39){\footnotesize $\xi_1(\sigma)$}
\put(0.5,-0.44){\footnotesize $\xi_1(\sigma)$}
\put(1,0.19){\footnotesize $\xi_1(\sigma)$}
\put(1.5,0.19){\footnotesize $\xi_1(\sigma)$}
\put(2,0.19){\footnotesize $\xi_1(\sigma)$}
\put(2.5,0.19){\footnotesize $\xi_1(\sigma)$}
\put(2.9,0.19){\footnotesize $\xi_2(\sigma)$}
%\put(3.5,0.35){\footnotesize $\alpha_{r-1}$}
%\put(3.5,-0.4){\footnotesize $\alpha_r$}

\end{picture}
$$
\vskip 35pt
\noindent
Again, it follows from \cite[Page 254]{Bor76} that $\mca{C}(\wt{G}) = \set{\sigma, \varepsilon_{\HH_I}}$ with
$$(\xi_1(\sigma), \xi_2(\sigma)) = (-1, 1).$$
In this case, the argument is exactly parallel to the type $B$ case and we have
$$\dim \Wh_{\psi_*}(\pi_\sigma) =\frac{1}{\val{W}} (n-1) \prod_{j=1}^{r-1} (n  + 2j - 1)$$
for this unique $\sigma$.

\subsection{Type $F_4$} 
The extended Dynkin diagram for the simple coroots $\wt{\Delta}^\vee$ of type $F_4$ is as follows:

$$
\begin{picture}(5.2,0.2)(0,0)
\put(1.5,0){\circle*{0.08}}
\put(2,0){\circle{0.08}}
\put(2.5,0){\circle{0.08}}
\put(3,0){\circle{0.08}}
\put(3.5,0){\circle{0.08}}
\put(1.54,0){\line(1,0){0.42}}
\put(2.04,0){\line(1,0){0.42}}
\put(2.54,0.015){\line(1,0){0.42}}
\put(2.54,-0.015){\line(1,0){0.42}}
%\put(2.72,-0.04){$>$}
\put(3.04,0){\line(1,0){0.42}}
\put(1.5,0.16){\footnotesize $\xi_1(\sigma)$}
\put(2,0.16){\footnotesize $\xi_1(\sigma)$}
\put(2.5,0.16){\footnotesize $\xi_1(\sigma)$}
\put(3,0.16){\footnotesize $\xi_2(\sigma)$}
\put(3.5,0.16){\footnotesize $\xi_2(\sigma)$}
\end{picture}
$$
\vskip 10pt
\noindent
We have $\mca{C}(\wt{G}) = \set{\sigma, \varepsilon_{\HH_I}}$ with
$$(\xi_1(\sigma), \xi_2(\sigma)) = (-1, 1).$$
In this case, 
$$(\sigma^\vee|_{\HH_W})_{q\to 1} = \phi_{1, 12}'$$
in the standard notation of parametrizing $\Irr(W)$, see \cite[Page 412]{Car}. It then follows from \cite[Page 26]{GNS99} that
$$\dim \Wh_{\psi_*}(\pi_\sigma) = \frac{1}{\val{W}} \cdot (n-1)(n-5)(n+1)(n+5)$$
for this $\sigma$, where $\val{W}=1152$.

\subsection{Type $G_2$}
We have the extended Dynkin diagram for $\wt{\Delta}^\vee$ of type $G_2$ as follows:
$$
\begin{picture}(5.2,0.3)(0,0)
\put(2,0){\circle*{0.08}}
\put(2.5,0){\circle{0.08}}
\put(3,0){\circle{0.08}}
\put(2.04,0){\line(1,0){0.42}}
\put(2.53,0.018){\line(1,0){0.44}}
\put(2.54,0){\line(1,0){0.42}}
\put(2.53,-0.018){\line(1,0){0.44}}
% \put(2.7,-0.04){$>$}
%
\put(2,0.15){\footnotesize $\xi_1(\sigma)$}
\put(2.5,0.15){\footnotesize $\xi_1(\sigma)$}
\put(3,0.15){\footnotesize $\xi_2(\sigma)$}
\end{picture}
$$
\vskip 10pt
\noindent One has $\mca{C}(\wt{G}) = \set{\sigma, \varepsilon_{\HH_I}}$ with $\sigma$ determined by 
$$(\xi_1(\sigma), \xi_2(\sigma))=(-1, 1).$$
This gives that
$$(\sigma^\vee|_{\HH_W})_{q\to 1} = \phi_{1, 3}'$$
in the notation of \cite[Page 412]{Car}. It then follows from \cite[Page 26]{GNS99} that
$$\dim \Wh_{\psi_*}(\pi_\sigma) = \frac{1}{12} \cdot (n-1)(n+1)$$
for this $\sigma$.

%%%
%\section{draft}
%Consider
%$$\left( \begin{array}{ccccc}
%-q & x &  & y &  \\
%& -1 & & & y^* \\
% & & 1 & & \\
% & & & -1 & x^* \\
%& & & & -q^{-1}
%\end{array} \right)$$
%
%We define
%$$\alpha_1:= e_1 - e_2, \quad \alpha_2 = e_2, \quad \alpha_3 = e_1 = \alpha_1 + \alpha_2,  \quad \alpha_4= e_1 + e_2 = \alpha_1 + 2\alpha_2.$$
%
%The above element is $x_{\alpha_1} + x_{\alpha_4}$.
%We have
%$$[x_{\alpha_1} + x_{\alpha_4}, x_{\alpha_1}] = 0$$
%$$[x_{\alpha_1} + x_{\alpha_4}, x_{\alpha_2}] = x_{\alpha_3}$$
%$$[x_{\alpha_1} + x_{\alpha_4}, x_{\alpha_3}] = 0$$
%$$[x_{\alpha_1} + x_{\alpha_4}, x_{\alpha_4}] = 0$$
%$$[x_{\alpha_1} + x_{\alpha_4}, x_{-\alpha_1}] = h_{\alpha_1} $$
%$$[x_{\alpha_1} + x_{\alpha_4}, x_{-\alpha_2}] = \pm x_{\alpha_3}$$
%$$[x_{\alpha_1} + x_{\alpha_4}, x_{-\alpha_3}] = \pm x_{-\alpha_2} \pm x_{\alpha_2}$$
%$$[x_{\alpha_1} + x_{\alpha_4}, x_{-\alpha_4}] = h_{\alpha_4}$$
%
%We consider the nilpotent matrix
%$$A= \left( \begin{array}{ccccc}
%0 & x &  & y &  \\
%& 0 & & & y^* \\
% & & 0 & & \\
% & & & 0 & x^* \\
%& & & & 0
%\end{array} \right)$$
%We have 
%$$A^2= \left( \begin{array}{ccccc}
%0 &  &  &  & xy^* + yx^*  \\
%& 0 & & &  \\
% & & 0 & & \\
% & & & 0 &  \\
%& & & & 0
%\end{array} \right)$$
%So, if $A^2 = 0$, then the orbit of $A$ is $(2^21)$. If $A^2\ne 0$, then its orbit is $(31^2)$. In either case, it is not the regular orbit!

%%%%%%%%%%%%%%% %%%%%%%%%%%%%%%%%%%%%%%%%%%%%%%%%%%%%%%%%
\begin{bibdiv}
\begin{biblist}[\resetbiblist{9999999}]*{labels={alphabetic}}

% \bibselect{MyAMSRefs}
 \bib{ABPS17}{article}{
  author={Aubert, Anne-Marie},
  author={Baum, Paul},
  author={Plymen, Roger},
  author={Solleveld, Maarten},
  title={The principal series of $p$-adic groups with disconnected center},
  journal={Proc. Lond. Math. Soc. (3)},
  volume={114},
  date={2017},
  number={5},
  pages={798--854},
  issn={0024-6115},
  review={\MR {3653247}},
  doi={10.1112/plms.12023},
}

\bib{Bor76}{article}{
  author={Borel, Armand},
  title={Admissible representations of a semi-simple group over a local field with vectors fixed under an Iwahori subgroup},
  journal={Invent. Math.},
  volume={35},
  date={1976},
  pages={233--259},
  issn={0020-9910},
  review={\MR {0444849}},
  doi={10.1007/BF01390139},
}

\bib{BD}{article}{
  author={Brylinski, Jean-Luc},
  author={Deligne, Pierre},
  title={Central extensions of reductive groups by $\bold K_2$},
  journal={Publ. Math. Inst. Hautes \'Etudes Sci.},
  number={94},
  date={2001},
  pages={5--85},
  issn={0073-8301},
  review={\MR {1896177}},
  doi={10.1007/s10240-001-8192-2},
}

\bib{Bum}{book}{
  author={Bump, Daniel},
  title={Automorphic forms and representations},
  series={Cambridge Studies in Advanced Mathematics},
  volume={55},
  publisher={Cambridge University Press, Cambridge},
  date={1997},
  pages={xiv+574},
  isbn={0-521-55098-X},
  review={\MR {1431508}},
  doi={10.1017/CBO9780511609572},
}

\bib{Car}{book}{
  author={Carter, Roger W.},
  title={Finite groups of Lie type},
  series={Wiley Classics Library},
  note={Conjugacy classes and complex characters; Reprint of the 1985 original; A Wiley-Interscience Publication},
  publisher={John Wiley \& Sons, Ltd., Chichester},
  date={1993},
  pages={xii+544},
  isbn={0-471-94109-3},
  review={\MR {1266626}},
}

\bib{CKK12}{article}{
  author={Ciubotaru, Dan},
  author={Kato, Midori},
  author={Kato, Syu},
  title={On characters and formal degrees of discrete series of affine Hecke algebras of classical types},
  journal={Invent. Math.},
  volume={187},
  date={2012},
  number={3},
  pages={589--635},
  issn={0020-9910},
  review={\MR {2891878}},
  doi={10.1007/s00222-011-0338-3},
}

\bib{CiOp15}{article}{
  author={Ciubotaru, Dan},
  author={Opdam, Eric},
  title={Formal degrees of unipotent discrete series representations and the exotic Fourier transform},
  journal={Proc. Lond. Math. Soc. (3)},
  volume={110},
  date={2015},
  number={3},
  pages={615--646},
  issn={0024-6115},
  review={\MR {3342100}},
  doi={10.1112/plms/pdu060},
}

\bib{DR}{article}{
  author={DeBacker, Stephen},
  author={Reeder, Mark},
  title={Depth-zero supercuspidal $L$-packets and their stability},
  journal={Ann. of Math. (2)},
  volume={169},
  date={2009},
  number={3},
  pages={795--901},
  issn={0003-486X},
  review={\MR {2480618}},
  doi={10.4007/annals.2009.169.795},
}

\bib{FeOp20}{article}{
  author={Feng, Yongqi},
  author={Opdam, Eric},
  title={On a uniqueness property of supercuspidal unipotent representations},
  journal={Adv. Math.},
  volume={375},
  date={2020},
  pages={107406, 62},
  issn={0001-8708},
  review={\MR {4170222}},
  doi={10.1016/j.aim.2020.107406},
}

\bib{FOS20}{article}{
  author={Feng, Yongqi},
  author={Opdam, Eric},
  author={Solleveld, Maarten},
  title={Supercuspidal unipotent representations: L-packets and formal degrees},
  language={English, with English and French summaries},
  journal={J. \'{E}c. polytech. Math.},
  volume={7},
  date={2020},
  pages={1133--1193},
  issn={2429-7100},
  review={\MR {4167790}},
  doi={10.5802/jep.138},
}

\bib{FOS22}{article}{
  author={Feng, Yongqi},
  author={Opdam, Eric},
  author={Solleveld, Maarten},
  title={On formal degrees of unipotent representations},
  journal={J. Inst. Math. Jussieu},
  volume={21},
  date={2022},
  number={6},
  pages={1947--1999},
  issn={1474-7480},
  review={\MR {4515286}},
  doi={10.1017/S1474748021000062},
}

\bib{FlKa86}{article}{
  author={Flicker, Yuval Z.},
  author={Kazhdan, David A.},
  title={Metaplectic correspondence},
  journal={Inst. Hautes \'{E}tudes Sci. Publ. Math.},
  number={64},
  date={1986},
  pages={53--110},
  issn={0073-8301},
  review={\MR {876160}},
}

\bib{GG}{article}{
  author={Gan, Wee Teck},
  author={Gao, Fan},
  title={The Langlands-Weissman program for Brylinski-Deligne extensions},
  language={English, with English and French summaries},
  note={L-groups and the Langlands program for covering groups},
  journal={Ast\'erisque},
  date={2018},
  number={398},
  pages={187--275},
  issn={0303-1179},
  isbn={978-2-85629-845-9},
  review={\MR {3802419}},
}

\bib{GaIc14}{article}{
  author={Gan, Wee Teck},
  author={Ichino, Atsushi},
  title={Formal degrees and local theta correspondence},
  journal={Invent. Math.},
  volume={195},
  date={2014},
  number={3},
  pages={509--672},
  issn={0020-9910},
  review={\MR {3166215}},
  doi={10.1007/s00222-013-0460-5},
}

\bib{GS12}{article}{
  author={Gan, Wee Teck},
  author={Savin, Gordan},
  title={Representations of metaplectic groups I: epsilon dichotomy and local Langlands correspondence},
  journal={Compos. Math.},
  volume={148},
  date={2012},
  number={6},
  pages={1655--1694},
  issn={0010-437X},
  review={\MR {2999299}},
  doi={10.1112/S0010437X12000486},
}

\bib{GS12-2}{article}{
  author={Gan, Wee Teck},
  author={Savin, Gordan},
  title={Representations of metaplectic groups II: Hecke algebra correspondences},
  journal={Represent. Theory},
  volume={16},
  date={2012},
  pages={513--539},
  review={\MR {2982417}},
  doi={10.1090/S1088-4165-2012-00423-X},
}

\bib{GGK}{article}{
  author={Gao, Fan},
  author={Gurevich, Nadya},
  author={Karasiewicz, Edmund},
  title={Genuine pro-$p$ Iwahori--Hecke algebras, Gelfand--Graev representations, and some applications},
  status={preprint (2022, 63 pages), available at https://arxiv.org/abs/2204.13053},
}

\bib{GGK2}{article}{
  author={Gao, Fan},
  author={Gurevich, Nadya},
  author={Karasiewicz, Edmund},
  title={Genuine Gelfand--Graev functor and the quantum affine Schur--Weyl duality},
  status={preprint (2023, 25 pages), available at https://arxiv.org/abs/2210.16138},
}

\bib{Gro97}{article}{
  author={Gross, Benedict H.},
  title={On the motive of a reductive group},
  journal={Invent. Math.},
  volume={130},
  date={1997},
  number={2},
  pages={287--313},
  issn={0020-9910},
  review={\MR {1474159}},
  doi={10.1007/s002220050186},
}

\bib{GNS99}{article}{
  author={Gyoja, Akihiko},
  author={Nishiyama, Kyo},
  author={Shimura, Hiroyuki},
  title={Invariants for representations of Weyl groups and two-sided cells},
  journal={J. Math. Soc. Japan},
  volume={51},
  date={1999},
  number={1},
  pages={1--34},
  issn={0025-5645},
  review={\MR {1661012}},
  doi={10.2969/jmsj/05110001},
}

\bib{HC70}{book}{
  author={Harish-Chandra},
  title={Harmonic analysis on reductive $p$-adic groups},
  series={Lecture Notes in Mathematics, Vol. 162},
  note={Notes by G. van Dijk},
  publisher={Springer-Verlag, Berlin-New York},
  date={1970},
  pages={iv+125},
  review={\MR {0414797}},
}

\bib{HII}{article}{
  author={Hiraga, Kaoru},
  author={Ichino, Atsushi},
  author={Ikeda, Tamotsu},
  title={Formal degrees and adjoint $\gamma $-factors},
  journal={J. Amer. Math. Soc.},
  volume={21},
  date={2008},
  number={1},
  pages={283--304},
  issn={0894-0347},
  review={\MR {2350057}},
  doi={10.1090/S0894-0347-07-00567-X},
}

\bib{HIIc}{article}{
  author={Hiraga, Kaoru},
  author={Ichino, Atsushi},
  author={Ikeda, Tamotsu},
  title={Correction to: ``Formal degrees and adjoint $\gamma $-factors'' [J. Amer. Math. Soc. {\bf 21} (2008), no. 1, 283--304; MR2350057]},
  journal={J. Amer. Math. Soc.},
  volume={21},
  date={2008},
  number={4},
  pages={1211--1213},
  issn={0894-0347},
  review={\MR {2425185}},
  doi={10.1090/S0894-0347-08-00605-X},
}

\bib{ILM17}{article}{
  author={Ichino, Atsushi},
  author={Lapid, Erez},
  author={Mao, Zhengyu},
  title={On the formal degrees of square-integrable representations of odd special orthogonal and metaplectic groups},
  journal={Duke Math. J.},
  volume={166},
  date={2017},
  number={7},
  pages={1301--1348},
  issn={0012-7094},
  review={\MR {3649356}},
  doi={10.1215/00127094-0000001X},
}

\bib{KL2}{article}{
  author={Kazhdan, David},
  author={Lusztig, George},
  title={Proof of the Deligne-Langlands conjecture for Hecke algebras},
  journal={Invent. Math.},
  volume={87},
  date={1987},
  number={1},
  pages={153--215},
  issn={0020-9910},
  review={\MR {862716}},
}

\bib{Kot88}{article}{
  author={Kottwitz, Robert E.},
  title={Tamagawa numbers},
  journal={Ann. of Math. (2)},
  volume={127},
  date={1988},
  number={3},
  pages={629--646},
  issn={0003-486X},
  review={\MR {942522}},
  doi={10.2307/2007007},
}

\bib{Mat77-B}{book}{
  author={Matsumoto, Hideya},
  title={Analyse harmonique dans les syst\`emes de Tits bornologiques de type affine},
  language={French},
  series={Lecture Notes in Mathematics, Vol. 590},
  publisher={Springer-Verlag, Berlin-New York},
  date={1977},
  pages={i+219},
  isbn={3-540-08249-2},
  review={\MR {0579177}},
}

\bib{Mc1}{article}{
  author={McNamara, Peter J.},
  title={Principal series representations of metaplectic groups over local fields},
  conference={ title={Multiple Dirichlet series, L-functions and automorphic forms}, },
  book={ series={Progr. Math.}, volume={300}, publisher={Birkh\"auser/Springer, New York}, },
  date={2012},
  pages={299--327},
  review={\MR {2963537}},
  doi={10.1007/978-0-8176-8334-413},
}

\bib{Qiu12}{article}{
  author={Qiu, Yannan},
  title={Generalized formal degree},
  journal={Int. Math. Res. Not. IMRN},
  date={2012},
  number={2},
  pages={239--298},
  issn={1073-7928},
  review={\MR {2876383}},
  doi={10.1093/imrn/rnr015},
}

\bib{Rap21}{article}{
  author={Beuzart-Plessis, Rapha\"{e}l},
  title={Plancherel formula for ${\rm GL}_n(F)\backslash {\rm GL}_n(E)$ and applications to the Ichino-Ikeda and formal degree conjectures for unitary groups},
  journal={Invent. Math.},
  volume={225},
  date={2021},
  number={1},
  pages={159--297},
  issn={0020-9910},
  review={\MR {4270666}},
  doi={10.1007/s00222-021-01032-6},
}

\bib{Ree94}{article}{
  author={Reeder, Mark},
  title={On the Iwahori-spherical discrete series for $p$-adic Chevalley groups; formal degrees and $L$-packets},
  journal={Ann. Sci. \'{E}cole Norm. Sup. (4)},
  volume={27},
  date={1994},
  number={4},
  pages={463--491},
  issn={0012-9593},
  review={\MR {1290396}},
}

\bib{Ree-00}{article}{
  author={Reeder, Mark},
  title={Formal degrees and $L$-packets of unipotent discrete series representations of exceptional $p$-adic groups},
  note={With an appendix by Frank L\"{u}beck},
  journal={J. Reine Angew. Math.},
  volume={520},
  date={2000},
  pages={37--93},
  issn={0075-4102},
  review={\MR {1748271}},
  doi={10.1515/crll.2000.023},
}

\bib{Ree4}{article}{
  author={Reeder, Mark},
  title={Isogenies of Hecke algebras and a Langlands correspondence for ramified principal series representations},
  journal={Represent. Theory},
  volume={6},
  date={2002},
  pages={101--126},
  review={\MR {1915088}},
  doi={10.1090/S1088-4165-02-00167-X},
}

\bib{Ren10}{book}{
  author={Renard, David},
  title={Repr\'{e}sentations des groupes r\'{e}ductifs $p$-adiques},
  language={French},
  series={Cours Sp\'{e}cialis\'{e}s [Specialized Courses]},
  volume={17},
  publisher={Soci\'{e}t\'{e} Math\'{e}matique de France, Paris},
  date={2010},
  pages={vi+332},
  isbn={978-2-85629-278-5},
  review={\MR {2567785}},
}

\bib{Roh}{article}{
  author={Rohrlich, David E.},
  title={Elliptic curves and the Weil-Deligne group},
  conference={ title={Elliptic curves and related topics}, },
  book={ series={CRM Proc. Lecture Notes}, volume={4}, publisher={Amer. Math. Soc., Providence, RI}, },
  date={1994},
  pages={125--157},
  review={\MR {1260960}},
}

\bib{Sav88}{article}{
  author={Savin, Gordan},
  title={Local Shimura correspondence},
  journal={Math. Ann.},
  volume={280},
  date={1988},
  number={2},
  pages={185--190},
  issn={0025-5831},
  review={\MR {929534}},
  doi={10.1007/BF01456050},
}

\bib{Sav04}{article}{
  author={Savin, Gordan},
  title={On unramified representations of covering groups},
  journal={J. Reine Angew. Math.},
  volume={566},
  date={2004},
  pages={111--134},
  issn={0075-4102},
  review={\MR {2039325}},
}

\bib{Som97}{article}{
  author={Sommers, Eric},
  title={A family of affine Weyl group representations},
  journal={Transform. Groups},
  volume={2},
  date={1997},
  number={4},
  pages={375--390},
  issn={1083-4362},
  review={\MR {1486037}},
  doi={10.1007/BF01234541},
}

\bib{Tat1}{article}{
  author={Tate, John T.},
  title={Number theoretic background},
  conference={ title={Automorphic forms, representations and $L$-functions}, address={Proc. Sympos. Pure Math., Oregon State Univ., Corvallis, Ore.}, date={1977}, },
  book={ series={Proc. Sympos. Pure Math., XXXIII}, publisher={Amer. Math. Soc., Providence, R.I.}, },
  date={1979},
  pages={3--26},
  review={\MR {546607}},
}

\bib{We6}{article}{
  author={Weissman, Martin H.},
  title={L-groups and parameters for covering groups},
  language={English, with English and French summaries},
  note={L-groups and the Langlands program for covering groups},
  journal={Ast\'erisque},
  date={2018},
  number={398},
  pages={33--186},
  issn={0303-1179},
  isbn={978-2-85629-845-9},
  review={\MR {3802418}},
}

\end{biblist}
\end{bibdiv}

\end{document}